\newcolumntype{2}{D{.}{}{2.0}}
   \def\<{{\langle}} 
  \def\>{{\rangle}}
  \def\note#1{{}}
  \def\note#1{}
  \def\beq{\begin{equation}} 
  \def\eeq{\end{equation}}
  \newcounter{zlist}
  \newcounter{blist}
  \newcounter{rlist}
\def\stac#1{\raise-.2cm\hbox{$\stackrel{\displaystyle\otimes}{\scriptscriptstyle{#1}}$}}
\def\cten#1{\raise-.2cm\hbox{$\stackrel{\displaystyle\reallywidehat{\otimes}}
{\scriptscriptstyle{#1}}$}}
  \def\Label#1{\label{#1}\ifmmode\llap{[#1] }\else 
  \marginpar{\smash{\hbox{\tiny [#1]}}}\fi} 
  \def\Label{\label}
  \newtheorem{proposition}{Proposition}[section]
  \newtheorem{lemma}[proposition]{Lemma} 
  \newtheorem{corollary}[proposition]{Corollary} 
  \newtheorem{theorem}[proposition]{Theorem} 
\theoremstyle{definition} 
  \newtheorem{definition}[proposition]{Definition}
  \newtheorem{example}[proposition]{Example}
  \theoremstyle{remark} 
  \newtheorem{remark}[proposition]{Remark}
  \newcounter{c} 
  \renewcommand{\[}{\setcounter{c}{1}$$} 
  \newcommand{\etyk}[1]{\vspace{-7.4mm}$$\begin{equation}\Label{#1} 
  \addtocounter{c}{1}} 
  \renewcommand{\]}{\ifnum \value{c}=1 $$\else \end{equation}\fi} 
   \numberwithin{equation}{section}
\def\ZZ{{\mathbb Z}}
\newcommand{\qQ}{\mathrm{Q}}
\newcommand{\tT}{\mathrm{T}}
\newcommand{\Cc}{\mathcal{C}}
\newcommand{\tz}[2]{\tau^{z}_{#1}(#2)}
\newcommand{\szp}[2]{\sigma^{p}_{#1}(#2)}
\newcommand{\tzp}[2]{\tau^{p}_{#1}(#2)}
\def\*C{{}^*\hspace*{-1pt}{\Cc}}
\def\text#1{{\rm {\rm #1}}}
 \def\h{\mathbf{h}}
 \def\1{\mathbf{1}}
\def\1\mathbf{1}
\def\|#1{\overline{#1}}
\def\h#1 {\hat{#1}}
\newcommand\reallywidehat[1]{%
\savestack{\tmpbox}{\stretchto{%
  \scaleto{%
    \scalerel*[\widthof{\ensuremath{#1}}]{\kern.1pt\mathchar"0362\kern.1pt}%
    {\rule{0ex}{\textheight}}
  }{\textheight}%
}{2.4ex}}%
\stackon[-6.9pt]{#1}{\tmpbox}%
}
\begin{document}

\title[Near braces and $p$-deformed braided groups]{Near  braces and $p$-deformed braided groups}

\begin{abstract}
\end{abstract}

\author{Anastasia Doikou}
\author{Bernard  Rybo{\l}owicz}

\address{(A.Doikou $\&$ B. Rybo{\l}owicz)
Department of Mathematics, 
Heriot-Watt University, 
Edinburgh EH14 4AS,  and Maxwell Institute for Mathematical Sciences, Edinburgh, UK}

\email{A.Doikou@hw.ac.uk,\ B.Rybolowicz@hw.ac.uk}

\subjclass[2010]{16S70; 16Y99; 08A99}

\keywords{Groups; skew braces,  braiding,  set-theoretic Yang-Baxter equation}
\baselineskip=15pt
\date\today

\vskip 0.4in

\begin{abstract}
\noindent  Motivated by recent findings on the derivation of parametric non-involutive solutions of the Yang-Baxter equation 
we reconstruct the underlying algebraic structures, called near braces.  Using the notion of the near braces we 
produce  new multi-parametric, non-degenerate,  non-involutive solutions of the set-theoretic Yang-Baxter equation. 
These solutions are generalisations of the known ones coming from braces and skew braces. Bijective maps associated to the inverse solutions 
are also constructed.  Furthermore,  we introduce the generalized notion of $p$-deformed braided groups and 
$p$-braidings and we show that every $p$-braiding is a solution of the braid equation.  
We also show that certain multi-parametric maps within the near braces provide special cases of $p$-braidings.
\end{abstract}

\maketitle

\date{}
\vskip 0.2in



\section{Introduction}

\noindent The aim of the present study is two-fold: on the one hand, motivated by recent fidings on parametric solutions \cite{DoiRyb} 
of the set-theoretic \cite{Drin, ESS} Yang-Baxter equation (YBE) \cite{Baxter, Yang} we derive the underlying algebraic structure associated to these solutions. On the other hand using the derived algebraic frame we introduce novel  multi-parametric classes of solutions of the YBE.

It is well established now that braces, first introduced by Rump \cite{[26]}, describe  all
non-degenerate involutive solutions of the YBE,  whereas skew braces were later introduced to describe non-involutive, non-degenerate solutions 
of the YBE \cite{GV}.  Indeed,  based on the ideas of \cite{[26]} and \cite{GV} and on recent findings regarding
parametric solutions of the set-theoretic YBE \cite{DoiRyb} we construct the generic algebraic structure, called near brace,  
that provides 
solutions to the set-theoretic braid equation.  Moreover, motivated by the definition of the braided group \cite{chin} 
and the work of \cite{GatMaj}, we introduce an extensive definition of a $p$-deformed braided group and $p$-braidings, 
which are solutions of the set-theoretic braid equation. All the parametric solutions derived here are indeed $p$-braidings.
It is worth noting that  the study of solutions of the set-theoretic Yang-Baxter equation and the associated algebraic structures 
have created a particularly active new field during the last decade or so
(see for instance \cite{Ba, bcjo, CJV, CMS, [6],fc}). The key observation is that by relaxing more and more conditions on the underlying algebraic structures one identifies more general classes of solutions (see e.g. \cite{Catino, CMS, JKA, kava, KSV, Lebed, SVB}, \cite{GI3}--\cite{gateva}). 
It is also worth noting that interesting links with quantum integrable systems \cite{DoiSmo1, DoiSmo2} as well the 
quasi-triangular quasi-bialgebras \cite{Doikoutw}-\cite{DoiRyb} have been recently established,  opening up new intriguing  paths of investigations.

We briefly describe what is achieved in this study, and in particular what are the findings  in each section. 
In the remaining of this section we review some necessary 
ideas on non-degenerate set-theoretic solutions of the YBE and the associated algebraic structures, i.e.  braces and skew braces.
In Section 2 inspired by the parametric solutions of the YBE introduced in \cite{DoiRyb} we reconstruct the generic 
associated algebraic structure called near brace.  In fact, every near  brace can turn to a skew 
brace by defining a suitably modified (deformed) addition; this is described in Theorem \ref{theorem1}.  
The key idea is to simultaneously consider $\check r$ and its inverse given that we are exclusively interested in non-degenerate solutions of the braid equation. Having derived the underlying algebraic structure
we move to Subsection 2.1 where we extract multi-parametric bijective maps and 
hence to  identify non-degenerate,  multi-parametric solutions of the YBE as well as their inverses.  
In Subsection 2.2 we provide a generalized definition of the braided group and braidings 
($p$-braidings, $p$ stands for parametric) 
by relaxing some of the conditions appearing in the definition of \cite{chin} (see also relevant findings in \cite{GatMaj}.)  
Furthermore,  we show that the generalized $p$-braidings are non-degenerate solutions of the YBE 
and the bijective maps coming for the near braces 
provide automatically $p$-braidings.

\subsection*{Preliminaries} Before we start our analysis and present our findings in the subsequent section we review below basic 
preliminary notions relevant to our investigation.  Specifically, we recall the problem of solving the set-theoretic braid equation 
and some fundamental results. Let $X=\{x_{1}, \ldots x_n\}$ be a set and ${\check r}^z:X\times X\rightarrow X\times X,$ 
where $z \in X$ is a fixed parameter, first introduced in \cite{DoiRyb}.
We denote 
\begin{equation}
{\check r}^z(x,y)=(\sigma^z _{x}(y), \tau^z _{y}(x)). \label{setY}
\end{equation}
We say that $\check r^z$ is non-degenerate if 
$\sigma^z _{x}$ and $\tau^z _{y}$ 
are bijective maps, and
$(X, {\check r})$ is a set-theoretic solution of the braid equation if
\begin{equation}
({\check r}^z\times \mbox{id})( \mbox{id} \times {\check r}^z)({\check r}^z\times \mbox{id})=(\mbox{id}\times 
{\check r}^z)({\check r}^z\times \mbox{id})(\mbox{id}\times {\check r}^z).\label{braid}
\end{equation}
The map $\check r$ is called involutive if $\check r^z \circ \check r^z=\mbox{id}.$

We also introduce the map $r: X\times X\rightarrow X\times X,$ such that $r^z = \check r^z\pi,$ 
where $\pi: X\times X\rightarrow X\times X$ 
is the flip map: $\pi(x,y) = (y,x).$ Hence, $r^z(y,x) =( \sigma_x^z(y), \tau_y^z(x)),$ and it satisfies the YBE:
\begin{equation}
r^z_{12}\ r^z_{13}\  r^z_{23} =r^z_{23}\ r^z_{13}\ r^z_{12},  \label{YBE}
\end{equation} 
where we denote $r^z_{12}(y,x,w) = (\sigma_x^z(y), \tau_y^z(x),w),$ $r^z_{23}(w,y,x) = (w, \sigma_x^z(y), \tau_y^z(x))$ 
and \\  $r^z_{13}(y,w,x) = (\sigma_x^z(y), w,\tau^z_y(x)).$

We review below the constraints arising by requiring $(X, \check r^z)$ to be a solution 
of the braid equation (\cite{Drin,  ESS, [25], [26]}). Let,
\[({\check r}^z\times \mbox{id})(\mbox{id}\times {\check r}^z)({\check r}^z\times \mbox{id})(\eta, x, y)=(L_1, L_2, L_3),\]
\[(\mbox{id}\times {\check r}^z)({\check r}^z\times \mbox{id})(\mbox{id}\times {\check r}^z)(\eta,x,y)= (R_1, R_2, R_3),\]
where, after employing expression  (\ref{setY}) we identify:
\begin{eqnarray}
L_1 = \sigma^z_{\sigma^z_{\eta}(x)}(\sigma^z_{\tau^z_{x}(\eta)}(y)),\quad L_2 = 
\tau^z_{\sigma^z_{\tau^z_x(\eta)}(y)}(\sigma^z_{\eta}(x)), \quad L_3 =\tau^z_y(\tau^z_x(\eta)),\nonumber
\end{eqnarray}
\begin{eqnarray}
R_1=\sigma^z_{\eta}(\sigma^z_x(y)), \quad R_1=\sigma^z_{\tau^z_{\sigma^z_x(y)}(\eta)}(\tau^z_{y}(x)), \quad 
R_3= \tau^z_{\tau^z_{y}(x)}(\tau^z_{\sigma^z_{x}(y)}(\eta)). \nonumber
\end{eqnarray}
And by requiring $L_i =R_i,$ $i\in \{1,2,3\}$ we obtain the following fundamental constraints for the associated maps:
\begin{eqnarray}
&&  \sigma^z_{\eta}(\sigma^z_x(y))= \sigma^z_{\sigma^z_{\eta}(x)}(\sigma^z_{\tau^z_{x}(\eta)}(y)), \label{C1}\\
&& \tau^z_y(\tau^z_x(\eta)) = \tau^z_{\tau^z_{y}(x)}(\tau^z_{\sigma^z_{x}(y)}(\eta)), \label{C2}\\
&&  \tau^z_{\sigma^z_{\tau^z_x(\eta)}(y)}(\sigma^z_{\eta}(x))= \sigma^z_{\tau^z_{\sigma^z_x(y)}(\eta)}(\tau^z_{y}(x)). \label{C3}
\end{eqnarray}
Note that the constraints above are the ones of the set-theoretic solution (\ref{setY}), given that $z$ 
is a fixed element of the set,  i.e.  for different elements $z$ we obtain in principle distinct solutions of the braid equation.

We review now the  basic definitions of the algebraic structures that provide set-theoretic solutions of the braid equation, 
such as left skew braces and  braces.  We also present some key properties associated 
to these structures that will be useful when formulating some of the main findings of the present study, summarized in Section 2.
\begin{definition}[\cite{[25], [26], [6]}]
A {\it left skew brace} is a set $B$ together with two group operations $+,\circ :B\times B\to B$, 
the first is called addition and the second is called multiplication, such that for all $ a,b,c\in B$,
\begin{equation}\label{def:dis}
a\circ (b+c)=a\circ b-a+a\circ c.
\end{equation}
If $+$ is an abelian group operation $B$ is called a {\it left brace}.
Moreover, if $B$ is a left skew brace and for all $ a,b,c\in B$ $(b+c)\circ a=b\circ a-a+c \circ a$, then $B$ is called a 
{\it skew brace}. Analogously if $+$ is abelian and $B$ is a skew brace,  then $B$ is called a {\it brace}.
\end{definition}

\begin{remark}
In the literature often left brace is just called a brace and left skew brace is called a skew brace. 
In that case various authors call skew braces two-sided skew braces.
\end{remark}

The additive identity of a left skew brace $B$ will be denoted by $0$ and the multiplicative identity by $1$. 
In every left skew brace $0=1$.  Indeed, this is easy to show: 
\begin{eqnarray}
 a\circ b = a\circ(b+0)\ \Rightarrow\ a\circ b = a\circ b -a +a\circ 0 \  \Rightarrow\ a \circ 0 = a\ \Rightarrow\  0=1. \nonumber
\end{eqnarray}

The two theorems that follow concern the case wher the parameter $z=1$.
Rump showed the following  theorem for  involutive set-theoretic solutions.
\begin{theorem}\label{Rump}(Rump's theorem, \cite{[25], [26]}).  
Assume  $(B, +, \circ)$ is a left brace. If the map  $\check r_B: B\times B \to B \times B$ is defined as 
${\check r}_B(x,y)=(\sigma _{x}(y), \tau _{y}(x))$, where $\sigma _{x}(y)=x\circ y-x$, 
$\tau _{y}(x)=t\circ x-t$, and $t $ 
is the inverse of $\sigma _{x}(y)$ in the circle group $(B, \circ ),$ then  $(B, \check r_B)$ is an involutive,  
non-degenerate solution of the braid equation.\\
Conversely,  if $(X,\check r)$ is an involutive, non-degenerate solution of the braid equation, then there exists a left brace $(B,+, \circ)$ 
(called an  underlying brace of the solution $(X, \check r)$) such that $B$ contains $X,$ $\check r_B(X\times X )\subseteq X \times X,$
and the map $\check r$ is equal to the restriction of $\check r_B$ to $X \times X.$ Both the additive $(B, +)$ 
and multiplicative $(B,\circ)$ groups of the left brace $(B,+, \circ)$ are generated by $X.$
\end{theorem}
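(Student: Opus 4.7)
The plan is to treat the two directions separately. The forward direction is a structured calculation, once one recognises the action of $(B,\circ)$ on $(B,+)$ hidden in the brace distributivity. The converse requires constructing the brace from the solution and is the harder part of the argument.

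For the forward direction, I would first observe that the distributivity law $a\circ(b+c)=a\circ b-a+a\circ c$ says exactly that $\lambda_a(b):=a\circ b-a=\sigma_a(b)$ is additive in $b$, and a short calculation using $1=0$ yields $\lambda_{a\circ b}=\lambda_a\lambda_b$, so $\lambda\colon(B,\circ)\to\Aut(B,+)$ is a group homomorphism and $\sigma_x(y)=\lambda_x(y)$. Writing $t:=\sigma_x(y)^{-1}$ for the circle inverse, the definition reads $\tau_y(x)=\lambda_t(x)$, and using that $+$ is abelian,
\begin{equation*}
\sigma_x(y)\circ\tau_y(x)\;=\;\lambda_{\sigma_x(y)}\lambda_t(x)+\sigma_x(y)\;=\;\lambda_0(x)+\sigma_x(y)\;=\;x+\sigma_x(y)\;=\;x\circ y.
\end{equation*}
These two facts are the core of the brace-to-solution correspondence.

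From here, (C1)--(C3) and involutivity become formal reads. Condition (C1) collapses to the identity $\lambda_\eta\lambda_x(y)=\lambda_{\sigma_\eta(x)\circ\tau_x(\eta)}(y)$: both sides equal $\lambda_{\eta\circ x}(y)$, the second by the product identity above. Involutivity follows from $\sigma_{\sigma_x(y)}(\tau_y(x))=\lambda_{\sigma_x(y)\circ t}(x)=\lambda_0(x)=x$ together with the symmetric computation $\tau_{\tau_y(x)}(\sigma_x(y))=\lambda_{x^{-1}}\lambda_x(y)=y$, after which (C2) and (C3) drop out by conjugating (C1) with the involution $\check r$. Non-degeneracy of $\sigma_x=\lambda_x$ is automatic; for $\tau_y$, inverting the product identity yields $\lambda_y(\tau_y(x)^{-1})=x^{-1}$, which exhibits $x\mapsto\tau_y(x)$ as a composition of bijections and lets one recover $x$ uniquely from $\tau_y(x)$.

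For the converse I would use the classical structure-group construction: let $G=G(X,\check r)$ be the group generated by $X$ subject to the relations $x\circ y=\sigma_x(y)\circ\tau_y(x)$, realise $(G,+)$ as the free abelian group on $X$, and define $\circ$ on $G$ via the formula $a\circ b:=\lambda_a(b)+a$, where $\lambda\colon G\to\Aut(G,+)$ is an extension of $x\mapsto\sigma_x$. The main obstacle lies here: one must check that (a) the would-be action $\lambda$ descends from the free group on $X$ to $G$, which is precisely where (C1) (together with non-degeneracy) is essential, and (b) the operation $\circ$ so defined is associative and satisfies the distributivity axiom, for which (C2)--(C3) and involutivity are what is needed. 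Once these verifications are complete, the inclusion $X\hookrightarrow G$ tautologically lands in a brace, the formula for $\check r_G$ restricted to $X\times X$ matches $\check r$ by construction, and generation of both $(G,+)$ and $(G,\circ)$ by $X$ is built into the presentation.
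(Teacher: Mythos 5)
The paper does not prove this statement: it is quoted as Rump's theorem with a citation to \cite{[25], [26]}, so there is no in-paper argument to compare yours against; I can only judge your proposal on its own terms. Your forward direction is essentially the standard argument and is mostly sound: the identification $\sigma_x=\lambda_x$, the homomorphism property $\lambda_{a\circ b}=\lambda_a\lambda_b$, the product identity $\sigma_x(y)\circ\tau_y(x)=x\circ y$, the verification of (C1) and of involutivity, and the non-degeneracy of $\tau_y$ via $\lambda_y\bigl(\tau_y(x)^{-1}\bigr)=x^{-1}$ are all correct. The one step that does not work as stated is deriving (C2) and (C3) ``by conjugating (C1) with the involution $\check r$''. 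Taking inverses of both sides of the braid identity using $\check r^2=\id$ returns the same identity and yields nothing; conjugating by the order-reversing permutation of the tensor factors turns (C1) into (C1) for the transposed map $\pi\check r\pi$, which is (C2) for $\check r$ --- but you have not established (C1) for the transposed map, so this is circular. The repair is short and you already hold the tools: since $\tau_y(x)=\sigma_x(y)^{-1}\circ x\circ y$, conditions (C2) and (C3) reduce to the two multiplicative identities $\sigma_a(\sigma_b(c))=\lambda_{a\circ b}(c)$ and $\sigma_a(b)\circ\sigma_{\tau_b(a)}(c)=\lambda_a(b\circ c)$, both immediate from $\lambda$ being a homomorphism into $\Aut(B,+)$; this is exactly the mechanism the paper abstracts in Definition \ref{defp} and Proposition \ref{prop2}.

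The converse is where the substantive gap lies. You name the right construction --- the structure group $G(X,\check r)$ presented by the relations $x\circ y=\sigma_x(y)\circ\tau_y(x)$, an additive copy of the free abelian group on $X$, and $a\circ b=\lambda_a(b)+a$ --- but the assertion that $(G,+)$ ``is'' the free abelian group on $X$ while $(G,\circ)$ is the group with that presentation is precisely the hard content of the theorem: one must prove that $G(X,\check r)$ is a group of $I$-type, i.e.\ that it acts freely and transitively on $\ZZ^{(X)}$ so that the two underlying sets can be identified, and one must separately show that $X$ injects into $G$ (without which ``$B$ contains $X$'' and ``$\check r_B$ restricts to $\check r$'' are vacuous). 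Both points use involutivity and non-degeneracy essentially, and neither is addressed beyond being flagged as ``the main obstacle''. As written, the forward half is a proof with one fixable flaw; the converse is a plan rather than a proof.
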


\begin{remark} [Rump]\label{nilpotent}
 Let $(N, +, \cdot)$ be an associative ring.  If for $a,b\in N$ we define 
 \[a\circ b=a\cdot b+a+b,\] then $(N, +, \circ )$ is a brace if and only if $(N, +, \cdot)$ is a radical ring.
\end{remark}

Guarnieri and Vendramin \cite{GV}, generalized Rump's result to left skew braces and non-degenerate,  non-involutive solutions.
\begin{theorem}[{\it Theorem \cite{GV}}]\label{thm:GV}
Let $B$ be a left skew brace, then the map $\check{r}_{GV}:B\times B\to B\times B$ given for all $ a,b\in B$ by
$$
\check{r}_{GV}(a,b)=(-a+a\circ b,\ (-a+a\circ b)^{-1}\circ a\circ b)
$$
is a non-degenerate solution of set-theoretic YBE.  
\end{theorem}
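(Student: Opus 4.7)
The plan is to recast $\check r_{GV}$ through the left action $\lambda_a(b) := \sigma_a(b) = -a + a\circ b$ and to derive both the braid equation and non-degeneracy from two structural facts: (i) the assignment $a \mapsto \lambda_a$ is a group homomorphism $(B,\circ) \to \Aut(B,+)$; (ii) the factorisation identity $\sigma_a(b)\circ \tau_b(a) = a\circ b$, which is just the definition of $\tau_b$ rearranged.

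For (i), additivity of $\lambda_a$ is a direct rearrangement of the skew-brace distributivity, and the multiplicativity $\lambda_{a\circ b} = \lambda_a \circ \lambda_b$ follows from a one-line computation using $1 = 0$ (proved in the excerpt). Each $\sigma_a = \lambda_a$ is therefore a bijection of $(B,+)$, which already gives half of non-degeneracy. With (i) and (ii) in hand, condition (C1) drops out instantly:
$$
\sigma_a \circ \sigma_b = \lambda_{a\circ b} = \lambda_{\sigma_a(b)\circ \tau_b(a)} = \lambda_{\sigma_a(b)}\circ \lambda_{\tau_b(a)} = \sigma_{\sigma_a(b)}\circ \sigma_{\tau_b(a)}.
$$

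The technical heart of the proof is a twisted multiplicativity identity for $\lambda$ over $\circ$,
$$
\lambda_\eta(x\circ y) \;=\; \lambda_\eta(x)\circ \lambda_{\tau_x(\eta)}(y),
$$
which I would derive by expanding $\lambda_\eta(x)\circ \lambda_{\tau_x(\eta)}(y) = \sigma_\eta(x)\circ(-\tau_x(\eta) + \tau_x(\eta)\circ y)$ via distributivity together with the auxiliary formula $u\circ(-v) = u - u\circ v + u$ (itself a consequence of distributivity applied to $u\circ(v + (-v)) = u$), and then simplifying with the factorisation identity $\sigma_\eta(x)\circ \tau_x(\eta) = \eta\circ x$. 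Applied to the two decompositions of the single element $x\circ y$ — the trivial one and the factorised form $\sigma_x(y)\circ \tau_y(x)$ — the identity yields two expansions whose equality, after cancelling the common leading factor $\sigma_\eta(\sigma_x(y))$ (identified via (C1)), is precisely (C3). Condition (C2) then follows from (C3) by two further applications of the factorisation identity to the relevant $\circ$-products on both sides.

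For the remaining half of non-degeneracy — bijectivity of $\tau_b$ — my plan is to show directly that $\tau_b^{-1} = \tau_{b^{-1}}$. Setting $a := \tau_{b^{-1}}(d) = \lambda_d(b^{-1})^{-1}\circ d\circ b^{-1}$, multiplicativity of $\lambda$ combined with $\lambda_{b^{-1}}(b) = -b^{-1}$ (a direct consequence of $1 = 0$) collapses to $\lambda_a(b) = \lambda_d(b^{-1})^{-1}$, whence $\tau_b(a) = \lambda_a(b)^{-1}\circ a\circ b = d$; swapping $b \leftrightarrow b^{-1}$ gives the other composition. The main obstacle throughout is the twisted multiplicativity identity: it is the single step that genuinely exploits the skew-brace axiom beyond the additivity of $\lambda_a$, and it is what drives both (C3) and (C2); once it is in place, the remainder of the proof is bookkeeping.
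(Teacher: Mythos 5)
Your proof is correct. Note first that the paper itself gives no proof of this statement: Theorem \ref{thm:GV} is quoted from Guarnieri--Vendramin, so there is no in-text argument to compare against line by line. Judged on its own, your argument is sound: additivity of $\lambda_a$ is the skew-brace axiom rearranged; multiplicativity $\lambda_{a\circ b}=\lambda_a\circ\lambda_b$ and the identity $u\circ(-v)=u-u\circ v+u$ both use $1=0$ (which the paper establishes); the derivation of (C1) from $\lambda_{a\circ b}=\lambda_{\sigma_a(b)\circ\tau_b(a)}$ is the standard one; and I checked that your twisted multiplicativity $\lambda_\eta(x\circ y)=\lambda_\eta(x)\circ\lambda_{\tau_x(\eta)}(y)$ expands correctly and that applying it to the two factorisations of $x\circ y$, then cancelling $\sigma_\eta(\sigma_x(y))$ via (C1), yields exactly (C3), with (C2) following by substituting $\tau_b(a)=\sigma_a(b)^{-1}\circ a\circ b$ on both sides. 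The computation $\lambda_a(b)=\lambda_d(b^{-1})^{-1}$ for $a=\tau_{b^{-1}}(d)$ also checks out, so $\tau_{b^{-1}}=\tau_b^{-1}$ and non-degeneracy is complete. It is worth observing that your route is structurally the same as the one the paper uses for its \emph{generalisation} of this theorem (Proposition \ref{lem:long2} and Theorem \ref{prop:main}): the paper's key computations there are precisely $\sigma_a(b)\circ\tau_b(a)=a\circ b$, the composition formula for $\sigma_a(\sigma_b(c))$, and the fact that $\sigma_a(b)\circ\sigma_{\tau_b(a)}(c)$ depends on $(a,b)$ only through $a\circ b$ --- the last being your twisted multiplicativity in parametric clothing, later axiomatised as conditions (2)--(3) of the $p$-braiding Definition \ref{defp}. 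What your write-up adds relative to the paper is an explicit proof of bijectivity of $\tau_b$ (via $\tau_b^{-1}=\tau_{b^{-1}}$), which the paper leaves implicit.
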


\section{Set-theoretic solutions of the YBE and near braces}

\noindent  In this section starting  from a generic 
$z$-parametric set-theoretic solution of the YBE \cite{DoiRyb} we reconstruct the underlying algebraic structure, which is similar to a skew brace.
Indeed,  we introduce in what follows suitable algebraic structures that satisfy 
the fundamental constraints (\ref{C1})-(\ref{C3}),  i.e.  provide solutions of the braid equation 
and generalize the findings of Rump and Guarnieri $\&$ Vendramin.  
The following generalizations are greatly inspired by recent results in \cite{DoiRyb}.

For the rest of the subsection we consider $X$ to be a set with an arbitrary group operation 
$\circ :X\times X\to X$,  with a neutral element $1\in X$ and an inverse $x^{-1} \in X,$ 
 for all $x \in X.$  There also exists a family of bijective functions indexed by $X,$  $\sigma^z_x: X \to X,$ 
such that $y \mapsto \sigma^z_x(y),$ where $z \in X$ is some fixed parameter.
We then define another binary operation $+: X\times X \to  X,$ such that
\begin{equation}
 y +x := x \circ \sigma^z_{x^{-1}}(y\circ z)\circ z^{-1} \label{mapzz}.
\end{equation}
For convenience we will omit henceforth the fixed $z\in X$ in $\sigma^z_x(y)$
and  simply write $\sigma_x(y).$
\begin{remark} \label{bi}
The operation $+$ is associative if and only if for all $x,y,c\in X$,
\begin{equation}\label{eq:tg5}
\sigma_{c^{-1}}(y\circ z^{-1}\circ\sigma_{z\circ y^{-1}}(x))=
\sigma_{c^{-1}}(y)\circ z^{-1}\circ \sigma_{(c\circ\sigma_{c^{-1}}(y)\circ z^{-1})^{-1}}(x).
\end{equation}
\end{remark}
\noindent From now on we will assume that the operation $+$ is associative, that is condition \eqref{eq:tg5} holds.

Also, we recall that we focus only on non-degenerate, invertible solutions $\check r.$  
Given that $\sigma_x$ and $\tau_y$ are bijections the inverse maps also exist such that
\begin{equation}
\sigma^{-1}_x(\sigma_x(y)) = \sigma_x(\sigma^{-1}_x(y))= y ,  \quad  \tau^{-1}_{y}(\tau_y(x)) = \tau_{y}(\tau^{-1}_y(x)) =x \label{inv1}
\end{equation}
Let
the inverse $\check r^{-1}(x,y) = (\hat \sigma_x(y), \hat \tau_y(x))$ exist with $\hat \sigma_x,\ \hat \tau_y$ 
being also bijections,  that satisfy:
\begin{equation}
\sigma_{\hat \sigma_x(y)} (\hat \tau_y(x)) = x=\hat \sigma_{\sigma_x(y)} (\tau_y(x)), \quad 
\tau_{\hat \tau_y(x)}(\hat \sigma_x(y)) = y = \hat \tau_{\tau_y(x)}(\sigma_x(y)).   \label{ide1}
\end{equation}
Taking also into consideration (\ref{inv1}) and (\ref{ide1})  and that  $\sigma_x,\tau_y$ 
and $\hat \sigma_x,  \hat\tau_y$ are bijections, we deduce:
\begin{equation}
\hat \sigma^{-1}_{\sigma_x(y)}(x) = \tau_y(x),  \quad \hat \tau^{-1}_{\tau_y(x)}(y) = \sigma_x(y).
\end{equation}

We assume that the map $\hat \sigma$ appearing in the inverse matrix $\check r^{-1}$ has the general form
\begin{equation}
\hat \sigma_{x}(y) :  =  x \circ (x^{-1} \circ z_2 + y\circ z_1) \circ   \xi,  \label{mapzz2}
\end{equation}
where  the parameters $z_{1,2}, \ \xi$ are to be identified.  
The derivation of $\check r$ goes hand in hand with the derivations of $\check r^{-1}$
(see details in \cite{DoiRyb} and later in the text when deriving a generic $\check r$ and its inverse).  
In the involutive case the two maps coincide and $x+y = y+x.$ However, 
for any non-degenerate,  non-involutive solution 
both bijective maps $\sigma_x, \hat \sigma_x$ should be 
considered together with the fundamental conditions (\ref{ide1}).

We present below a series of useful Lemmas that will lead to one of our main theorems.
\begin{remark} \label{rr1} This is just a reminder of a well known fact. 
We recall that $\sigma_x$ is a bijective function.  Recalling also definiton (\ref{mapzz}):
\begin{equation}
\sigma_x(y_1) = \sigma_x(y_2) \Leftrightarrow   y_1 \circ z^{-1}  + x^{-1} = y_2 \circ z^{-1}  + x^{-1},
\end{equation}
which implies right cancellation of $+.$ Similarly $ \hat \sigma_x$ is 
a bijective function and this leads to left cancellation.
\end{remark}

\begin{lemma}\label{lem:bij}
For all $x\in X$, the operations $+x,\ x+:X\to X$ are bijections.
\end{lemma}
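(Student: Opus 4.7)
The plan for the first assertion is immediate from \eqref{mapzz}: the map $y \mapsto y+x$ factors as
\begin{equation*}
y \;\longmapsto\; y \circ z \;\longmapsto\; \sigma_{x^{-1}}(y \circ z) \;\longmapsto\; x \circ \sigma_{x^{-1}}(y \circ z) \circ z^{-1},
\end{equation*}
that is, right multiplication by $z$, followed by $\sigma_{x^{-1}}$, followed by $v \mapsto x \circ v \circ z^{-1}$. Since $(X, \circ)$ is a group and $\sigma_{x^{-1}}$ is bijective by hypothesis, each of these three maps is a bijection of $X$, so the composite $+x$ is a bijection. I would record this as the first step.

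The second assertion is genuinely more subtle. Swapping the roles of the two arguments in \eqref{mapzz} gives
\begin{equation*}
x+y \;=\; y \circ \sigma_{y^{-1}}(x \circ z) \circ z^{-1},
\end{equation*}
and here the variable $y$ enters both through the outer $\circ$-multiplication and through the subscript of $\sigma$, so no brute unpacking into a composition of bijections is available. My plan is to extract bijectivity instead from the companion ansatz \eqref{mapzz2} for $\hat\sigma$. By \eqref{mapzz2}, for every $a \in X$ the map $\hat\sigma_a:X\to X$ is
\begin{equation*}
y \;\longmapsto\; a \circ \bigl(a^{-1} \circ z_2 + y \circ z_1\bigr) \circ \xi,
\end{equation*}
and it is a bijection of $X$ by assumption. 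Peeling off the outer bijection $v \mapsto a \circ v \circ \xi$ and the inner bijection $y \mapsto y \circ z_1$, I conclude that $w \mapsto (a^{-1}\circ z_2) + w$ is a bijection of $X$. Finally, since $a \mapsto a^{-1} \circ z_2$ is itself a bijection of the group $(X,\circ)$, letting $a$ range over $X$ shows that $u+$ is a bijection of $X$ for every $u \in X$.

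The main obstacle I anticipate is precisely the asymmetry of the definition \eqref{mapzz} of $+$: it is tailored so that $+x$ is transparently bijective, yet it affords no direct handle on the left translation $x+$. The key insight is that \eqref{mapzz2} supplies exactly the missing symmetry --- it encodes $\hat\sigma$ as an expression in which the element fixed by the subscript appears on the \emph{left} of $+$ --- so the bijectivity assumption on $\hat\sigma$ converts into bijectivity of left translation by $+$. Notably, this argument needs neither the associativity condition \eqref{eq:tg5} nor the compatibility identities \eqref{ide1} between $\sigma,\tau$ and $\hat\sigma,\hat\tau$; only the bijectivity of $\sigma$ and of $\hat\sigma$, and the group structure on $(X,\circ)$, are invoked.
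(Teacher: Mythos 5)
Your proposal is correct and follows essentially the same route as the paper: bijectivity of $+x$ is read off from the formula \eqref{mapzz} together with the bijectivity of $\sigma_{x^{-1}}$ and the group structure of $(X,\circ)$, and bijectivity of $x+$ is extracted from the ansatz \eqref{mapzz2} and the assumed bijectivity of $\hat\sigma_a$ (the paper merely says ``similarly'' here, whereas you spell out the peeling-off of the outer and inner bijections and the surjectivity of $a\mapsto a^{-1}\circ z_2$). Your packaging of the first part as a composition of three bijections is a tidier presentation of the paper's separate injectivity/surjectivity computations, but it is the same argument.
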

\begin{proof}
Let $y_1,y_2\in X$ be such that $y_1+x=y_2+x$, then
$$
x\circ \sigma_{x^{-1}}(y_1\circ z)\circ z^{-1}=x\circ \sigma_{x^{-1}}(y_2\circ z)\circ z^{-1}\implies  \sigma_{x^{-1}}(y_1\circ z)=\sigma_{x^{-1}}(y_2\circ z),
$$
since $\circ$ is a group operation and $\sigma_{x^{-1}}$ is injective, we get that $y_1=y_2$ and $+x$ is injective for any $x\in X$. 
From the surjectivity, we observe that since $\sigma_{x^{-1}}$ is bijective, we can consider $d=\sigma^{-1}_{x^{-1}}(x^{-1}\circ c\circ z)\circ z^{-1}$, 
one can easily see that $d+x=c$, and since $c$ is arbitrary we get that $+x$ is a surjection. Thus $+x$ is a bijection. 
Similarly,  from the bijectivity of $\hat \sigma_x$ and (\ref{mapzz2}) we show that $x+$ is also a bijection.
\end{proof}

We now introduce the notion of neutral elements in $(X,+)$

\begin{lemma} Let  $(X,+)$ be a semigroup,  then for all
$ x \in X$ there exists $ 0_x \in X$ such that $0_x + x = x.$  Moreover, for all $x, y \in X,$ $0_x = 0_y =0,$
i.e. $0$ is the unique left neutral element.  The left neutral element $0$ is also right neutral element. 
\end{lemma}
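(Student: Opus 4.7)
\medskip

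\noindent\textbf{Proof plan.}
The strategy is to bootstrap from Lemma \ref{lem:bij}: for every $x\in X$ the translation $+x\colon X\to X$ is a bijection, so there is a unique element $0_x\in X$ with $0_x+x=x$. The task splits into three sub-claims: (i) each such $0_x$ is in fact a universal left neutral; (ii) any two such candidates coincide; (iii) the common element $0$ is also a right neutral.

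For (i), I would fix $x\in X$ together with its left neutral $0_x$ and, given an arbitrary $u\in X$, use Lemma \ref{lem:bij} again to write $u=x+w$ for some $w\in X$ (the translation $x+$ is a bijection). Associativity of $+$ then gives
\begin{equation*}
0_x+u=0_x+(x+w)=(0_x+x)+w=x+w=u,
\end{equation*}
so $0_x+u=u$ for every $u\in X$, i.e.\ $0_x$ is a left neutral element of the whole semigroup. For (ii), suppose $0_y$ is constructed analogously for some other $y\in X$. Applying the universal left-neutral property of $0_x$ yields $0_x+y=y$, and by definition $0_y+y=y$, so right-cancellation (i.e.\ injectivity of $+y$) forces $0_x=0_y$. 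This identifies all the candidates into a single element $0\in X$.

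For (iii), the idea is again to lean on associativity and the injectivity of the right translations. For any $x\in X$ and any test element $y\in X$,
\begin{equation*}
(x+0)+y=x+(0+y)=x+y,
\end{equation*}
since $0$ is already a left neutral by parts (i)--(ii). Bijectivity of $+y$ (Lemma \ref{lem:bij}) then gives $x+0=x$, establishing that $0$ is also right neutral.

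No step is genuinely hard; the main delicacy is simply to remember that at this stage we do \emph{not} yet know $+$ has a two-sided identity, so (i) must be proved before (ii), and (iii) must be postponed until $0$ has been recognised as a universal left neutral. The whole argument uses only associativity together with the two bijectivity properties recorded in Lemma \ref{lem:bij}, and does not require any further structure coming from $\circ$ or $\sigma$.
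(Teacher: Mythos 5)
Your proof is correct and follows essentially the same route as the paper: obtain $0_x$ from the bijectivity of the translations (the paper writes it down explicitly as $\sigma^{-1}_{x^{-1}}(z)\circ z^{-1}$, you get it from surjectivity of $+x$, which amounts to the same thing given Lemma \ref{lem:bij}), then use associativity together with surjectivity of $x+$ and right cancellation to identify all the $0_x$ as one universal left neutral, and finally cancel on the right in $x+0+y=x+y$ to get right neutrality. The only difference is organizational, not mathematical.
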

\begin{proof}
Notice that due to bijectivity of $\sigma_x$, we can consider the element 
\begin{equation}
0_x:= \sigma^{-1}_{x^{-1}}(z)\circ z^{-1}  \in X, \nonumber
\end{equation}
recall also the definition of $+$ in (\ref{mapzz}), then simple computation shows:
\begin{equation}
0_x + x = x \circ  \sigma_{x^{-1}}(\sigma^{-1}_{x^{-1}}(z)) \circ z^{-1} = x\circ z \circ z^{-1} = x.
\end{equation}

We have,
\begin{equation}
0_x  + x = x \Rightarrow 0_x  + x +y = x+y, \nonumber
\end{equation}
but also 
\begin{equation}
0_{x+y} + x +y = x+y.  \nonumber
\end{equation}
The  last two equations lead to $0_x + x+y  = 0_{x +y }+x +y,$ and due Lemma \ref{lem:bij} right cancellation holds, so we
 get that $0_x=0_{x+y}$ for all $y\in X$. Observe that by the Lemma  
\ref{lem:bij}, $x+$ is a surjection, that is for all $w\in X$ exists $y\in X$ such that $x+y=w$, that is $0:=0_x=0_w$ for all $w\in X$.

Moreover,  $0 +y =y \Rightarrow x+0 +y = x+y$ and due to associativity and 
right cancellativity (Lemma \ref{lem:bij}) we get $x+ 0 = x,$ for all $x\in X$.
\end{proof}

\begin{lemma} Let $0$ be the neutral element in $(X,+)$,  then
for all $ x \in X$ there exists $-x \in X,$ such that $-x  + x = 0$ (left inverse).  
Moreover, $-x \in X$ is a right inverse,  i.e.  $x +( -x )= 0$ $\forall x \in X.$ 
That is $(X,+,0)$ is a group.
\end{lemma}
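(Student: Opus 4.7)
The plan is to prove the two claims in turn and then assemble them into the group statement. For the \textbf{existence of a left inverse}, I would exploit the defining formula $w+x=x\circ\sigma_{x^{-1}}(w\circ z)\circ z^{-1}$ together with the bijectivity of $\sigma_{x^{-1}}$. Setting this expression equal to $0$ and using that $(X,\circ)$ is a group, one can solve explicitly for $w$:
\begin{equation*}
-x \; := \; \sigma^{-1}_{x^{-1}}\!\bigl(x^{-1}\circ 0 \circ z\bigr)\circ z^{-1}.
\end{equation*}
A direct substitution into \eqref{mapzz} then verifies $-x+x=0$. So existence of a left inverse is immediate from the explicit formula; no obstacle here.

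The more substantive step is to show that the \textbf{left inverse is also a right inverse}. For this I would invoke the standard semigroup argument that holds as soon as one has associativity, a left identity and left inverses. Let $-x$ satisfy $(-x)+x=0$, and let $y$ be a left inverse of $-x$, i.e.\ $y+(-x)=0$ (existence guaranteed by the previous step applied to $-x$). Then using associativity of $+$ (assumed throughout the subsection via \eqref{eq:tg5}) and the left identity property $0+x=x$:
\begin{equation*}
y \;=\; y + 0 \;=\; y + \bigl((-x)+x\bigr) \;=\; \bigl(y+(-x)\bigr)+x \;=\; 0+x \;=\; x.
\end{equation*}
Hence $x+(-x)=y+(-x)=0$, so the left inverse is a genuine two-sided inverse. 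The same chain of equalities would also work by appealing to the right cancellativity of $+$ established in Lemma \ref{lem:bij}, which is an alternative route.

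Finally, to \textbf{conclude that $(X,+,0)$ is a group}, I would note that we have an associative binary operation, a two-sided identity $0$ (from the previous lemma), and now two-sided inverses. These are precisely the group axioms, so no further work is required.

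The only real subtlety I anticipate is making sure that the left identity $0$ used in the formula for $-x$ is truly the uniform neutral element from the previous lemma rather than an $x$-dependent one, but this is already handled there: the notation $0$ unambiguously denotes the unique two-sided neutral element. The rest is either a direct bijectivity argument or a textbook semigroup manipulation, so I do not expect a serious obstacle.
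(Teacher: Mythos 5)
Your proposal is correct and matches the paper's proof essentially step for step: the explicit formula $-x=\sigma_{x^{-1}}^{-1}(x^{-1}\circ 0\circ z)\circ z^{-1}$ is exactly the paper's choice, and the verification that it is a left inverse is the same direct substitution. For the upgrade to a two-sided inverse the paper argues via $x+(-x)+x=0+x$ and the right cancellativity of Lemma \ref{lem:bij}, which is precisely the alternative route you mention; your primary route (the textbook ``left inverse of the left inverse'' argument using the two-sided identity $0$) is an equally valid minor variant.
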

\begin{proof} 
Observe that due to bijectivity of $\sigma_x$, we consider the element
\begin{equation}-x:=\sigma_{x^{-1}}^{-1}(x^{-1}\circ 0\circ z)\circ z^{-1} \in X.
\end{equation}
Simple computation shows it is a left inverse,
$$
-x+x=x\circ \sigma_{x^{-1}} (\sigma_{x^{-1}}^{-1}(x^{-1}\circ 0\circ z)\circ z^{-1}\circ z)\circ z^{-1}=0.
$$
By associativity we deduce that $x+(-x)+x=0+x$, we get that $x+(-x)=0$, and $-x$ is the inverse.  
\end{proof}
To conclude,  having only assumed associativity in $+$ (\ref{mapzz}) we  deduced that $(X,+)$ is a group.
We may now present our main findings described in the following central theorem.

\begin{theorem} \label{theorem1}
Let $(X, \circ)$ be a group and $\check r: X \times X \to X \times X$ be 
such that $\check r (x, y) = (\sigma_x(y), \tau_x(y))$ is a non-degenerate solution of the 
set-theoretic braid equation. Moreover,  we assume that:
\begin{enumerate}[(A)]
\item\label{prop:con0} The pair $(X,+)$ ($+$ is  defined in (\ref{mapzz})) is a group.
\item\label{prop:con1} There exists $\phi:X\to X$ such that for all $a,b,c\in X$ $a\circ (b+c)=a\circ b+\phi(a)+a\circ c.$
\item \label{prop:con2} For $h \in \{z, \xi\} \in X$ appearing in $\sigma_x(y)$  and $\hat \sigma_x(y)$ there exist $\widehat{\phi}:X\to X$ 
such that for all $a,b\in X$ $(a+b)\circ h=a\circ h+\widehat{\phi}(h)+b\circ h.$
\item The neutral element $0$ of $(X, +)$ has a left and right distributivity.
\end{enumerate}
Then for all $a,b,c\in X$ the following statements hold:
\begin{enumerate}
\item $\phi(a) = - a\circ 0$ and $~\widehat{\phi}(h) = -0\circ h$, \label{prop:main2}
\item  $\sigma_a(b)=(a\circ b\circ z^{-1}-a\circ 0+1)\circ z= a\circ b - a \circ 0\circ z + z.$\label{prop:main3}
\item $~a-a\circ 0 =1$ and  (i) $0\circ 0=-1$  (ii) $1+1 = 0^{-1}.$  \label{prop:main4} 
\item If $~z_2\circ \xi  =0^{-1},$ $~ 0 \circ \xi   = z_1 \circ \xi = z^{-1} \circ 0^{-1},$ then  (i) $\hat \sigma_a(b) \circ \hat \tau_b(a)= a\circ b  = \sigma_a(b) \circ\tau_b(a)$  (ii) $ - a\circ 0+a =1.$ \label{prop:main4b} 
\end{enumerate}
\end{theorem}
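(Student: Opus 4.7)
The plan is to prove the four parts in order, using each as input for the next.

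First, for part (1), I would substitute $b = c = 0$ into hypothesis (B) to obtain $a\circ 0 = a\circ 0 + \phi(a) + a\circ 0$, and then invoke left cancellation in the group $(X, +)$ (guaranteed by Lemma \ref{lem:bij}) to conclude $\phi(a) = -a\circ 0$. The formula $\widehat{\phi}(h) = -0\circ h$ follows identically from hypothesis (C). For part (2), I rewrite the defining equation (\ref{mapzz}) with $x \to a^{-1}$ and $y \to b\circ z^{-1}$, yielding $\sigma_a(b) = a\circ(b\circ z^{-1} + a^{-1})\circ z$. Hypothesis (B) applied to the inner sum, combined with $\phi(a) = -a\circ 0$ from part (1), immediately gives the first expression. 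To derive the second expression, I expand $(\cdots)\circ z$ using (C) twice and substitute $\widehat\phi(z) = -0\circ z$ together with the auxiliary identity $(-w)\circ z = 0\circ z - w\circ z + 0\circ z$, obtained by applying (C) to $(w + (-w))\circ z = 0\circ z$; the correction terms telescope to give $a\circ b - a\circ 0\circ z + z$.

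Part (3) is the conceptual core of the theorem. Once the key identity $a - a\circ 0 = 1$ is established, the two sub-statements follow by specialization: setting $a = 0$ yields $-0\circ 0 = 1$, so $0\circ 0 = -1$; setting $a = 0^{-1}$ yields $0^{-1} - 1 = 1$, so $0^{-1} = 1+1$. To derive $a - a\circ 0 = 1$, I would exploit hypothesis (D)---the distributivity of the additive neutral $0$---to close the algebraic system left under-determined by (B) and (C) alone. Specifically, I would substitute into the two equivalent forms of $\sigma_a(b)$ from part (2), using (D) to eliminate the free data $a\circ 0$ up to the multiplicative identity.

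For part (4), the hypotheses $z_2\circ\xi = 0^{-1}$ and $0\circ\xi = z_1\circ\xi = z^{-1}\circ 0^{-1}$ immediately force $z_1 = 0$ and $z_2 = 0^{-1}\circ\xi^{-1}$. Plugging these into (\ref{mapzz2}) and then applying hypotheses (B) and (C) as in part (2) (now with $h = \xi$) yields an explicit closed form for $\hat\sigma_a(b)$, structurally parallel to the formula for $\sigma_a(b)$. The inverse-pair identities (\ref{ide1}), combined with the explicit formula $\sigma_u(v) = u\circ(v\circ z^{-1} + u^{-1})\circ z$, let me solve for $\tau_b(a)$ and $\hat\tau_b(a)$ in closed form. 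Computing $\sigma_a(b)\circ\tau_b(a)$ and $\hat\sigma_a(b)\circ\hat\tau_b(a)$, then simplifying via (B), (C), and part (3), yields $a\circ b$ in both cases, proving (i). Claim (ii), $-a\circ 0 + a = 1$, is obtained by an analogous calculation to part (3) but using the $\hat\sigma$ side of the pair.

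The main obstacle is part (3): extracting $a - a\circ 0 = 1$ requires a precise use of hypothesis (D) that parts (1) and (2) leave untouched. All other parts reduce to careful manipulation using (B), (C), and cancellation in $(X,+)$, with part (4) being bookkeeping-intensive but not conceptually novel once the closed form for $\hat\sigma$ is in hand.
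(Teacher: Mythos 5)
Parts (1) and (2) of your plan are sound and essentially reproduce the paper's argument (the paper expands $a\circ(0+1)$ rather than $a\circ(0+0)$ to get $\phi(a)=-a\circ 0$, but that is immaterial; your derivation of $\sigma_a(b)=a\circ(b\circ z^{-1}+a^{-1})\circ z$ and of the auxiliary identity $(-w)\circ z=0\circ z-w\circ z+0\circ z$ is exactly what the paper does). The genuine gap is in part (3). The identity $a-a\circ 0=1$ is \emph{not} a consequence of hypotheses (B), (C) and (D) alone: the paper itself singles out the class of ``singular'' near braces as precisely those in which $a-a\circ 0=-a\circ 0+a=1$, which already signals that the twisted distributivity laws do not force it. Your proposed route --- ``substitute into the two equivalent forms of $\sigma_a(b)$ from part (2), using (D) to eliminate the free data'' --- cannot produce anything: the second form of $\sigma_a(b)$ was obtained from the first by an identity valid in $(X,+,\circ)$, so comparing the two forms is a tautology, and hypothesis (D) only governs expressions in which $0$ appears as a factor, not the combination $a-a\circ 0$ for arbitrary $a$. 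The missing ingredient is the standing hypothesis that $\check r$ solves the braid equation. The paper computes
\begin{equation*}
\sigma_a(\sigma_b(c))=\bigl(a\circ b\circ c\circ z^{-1}-a\circ b\circ 0+a-a\circ 0+1\bigr)\circ z
\end{equation*}
and imposes constraint (\ref{C1}), i.e.\ equality with $\sigma_{\sigma_a(b)}(\sigma_{\tau_b(a)}(c))$, then specializes $c=0\circ z$; matching the resulting expressions forces $a-a\circ 0$ to be a constant $\zeta$ independent of $a$, and evaluating at $a=1$ gives $\zeta=1$. Without invoking (\ref{C1}) there is no way to close this step.

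The same omission recurs in part (4)(ii): in the paper, $-a\circ 0+a=1$ is extracted from the braid constraint (\ref{C2}) combined with $\sigma_a(b)\circ\tau_b(a)=a\circ b$ (one expands $\sigma_a(b)\circ\sigma_{\tau_b(a)}(c)$ and matches constant blocks exactly as in part (3)), not from a purely algebraic ``analogous calculation on the $\hat\sigma$ side.'' The remainder of your part (4) --- reading off $z_1=0$ and $z_2=0^{-1}\circ\xi^{-1}$ from the stated relations, expanding $\hat\sigma_a(b)$ from (\ref{mapzz2}) via the distributivity laws, and feeding the result into the inverse-pair identities (\ref{ide1}) to obtain $\hat\sigma_a(b)\circ\hat\tau_b(a)=a\circ b=\sigma_a(b)\circ\tau_b(a)$ --- follows the paper's route and is fine, granting part (3).
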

\begin{proof}

$ $

\begin{enumerate}

\item In the following the distributivity rule  $a\circ(b +c ) = a\circ b +\phi(a) + b\circ c$ holds, then
\begin{equation}
a = a\circ (0 + 1) = a \circ 0 + \phi(a) + a\circ 1\  \Rightarrow \phi(a) = -a\circ 0, \nonumber
\end{equation}
Also, for those $z \in X$ such that $(a +b )\circ z = a\circ z + \hat \phi(z) + b\circ z$ we have 
\begin{equation} 
z =(0+1) \circ z  = 0\circ z + \hat \phi(z) + z\ \Rightarrow \hat \phi(z) = -0 \circ z. \nonumber
\end{equation}

\item Using the distributivity rule we obtain
\begin{equation}
\sigma_a(b) = (a\circ b \circ z^{-1} - a \circ 0 +1 )\circ z. \label{dist}
\end{equation}
Before we move on with the rest of the proof it is useful to calculate
$(-a) \circ z,$ indeed:
\begin{eqnarray}
0 \circ z &= & (a -a ) \circ z  \Rightarrow 0\circ z = a \circ z - 0\circ z + (-a) \circ z  \nonumber\\ &  
\Rightarrow& (-a)\circ z = 0\circ z - a \circ z + 0 \circ z. 
\end{eqnarray}
The latter then leads to the following convenient identity (see also \cite{DoiRyb} and Lemma \ref{lem:ter} later in the text)
\begin{equation}
(a-b +c)\circ z = a\circ z -b\circ z + c \circ z, \nonumber
\end{equation}
and hence (\ref{dist}) becomes $~\sigma_a(b) = a\circ b - a \circ 0\circ z + z. $
\item Due to the fact that $\check r$ satisfies the braid equation we may  employ (\ref{C1}) and the general distributivity rule (see also (\ref{dist})):
\begin{eqnarray}
\sigma_a(\sigma_b(c)) &= & (a \circ \sigma_b(c) \circ z^{-1} - a\circ 0 +1 )\circ z \nonumber\\
&=& (a\circ b (c \circ z^{-1} +b^{-1}) \circ z \circ z^{-1} - a\circ 0 +1)\circ z \nonumber\\
& =& \big (a\circ b \circ c \circ z^{-1} - a\circ b\circ 0 + a - a\circ 0 +1\big )\circ z. \label{basic1} \nonumber
\end{eqnarray}
But due to condition (\ref{C1}) and by setting $c = 0\circ z,$ we deduce that
$a- a\circ 0 = \zeta,$ for all $a \in X$   ($\zeta$ is a fixed element in $X$), but for $a=1$  we immediately obtain $\zeta = 1,$
i.e.  
\begin{equation}
a -a \circ 0 =1. \label{conditiona}
\end{equation} 

\noindent (i) By setting $a=0$ in (\ref{conditiona}) we have $0\circ 0 =-1.$

\noindent (ii) $0\circ (1+1) = 0\circ 1 - 0\circ 0 + 0\circ 1 \Rightarrow 0\circ (1+1) = 1 \Rightarrow 1+1 = 0^{-1}.$\\

\item  For the following we set  $~z_2\circ \xi  =0^{-1},$ $~ 0 \circ \xi   = z_1 \circ \xi = z^{-1} \circ 0^{-1}.$
\\

\noindent (i) Recall the form of $\hat \sigma_a(b) $ (\ref{mapzz2}), and use the distributivity rules, then 
\begin{equation}
\hat \sigma_a(b) = z_2\circ \xi - a \circ 0 \circ \xi + a\circ b\circ  z_1\circ \xi.
\end{equation}

We consider now the fixed constants: $z_2\circ \xi  =0^{-1},$ $ 0 \circ \xi   = z_1 \circ \xi = z^{-1} \circ 0^{-1}.$  
Note that if $z$ satisfies the right distributivity then so does $z^{-1}$ (see Proposition 2.3 in \cite{DoiRyb}) and also $0 \circ z,$ 
given that $0$ has left and right distributivity. We recall relations (\ref{ide1}) for the maps, then
\begin{eqnarray}
& &\sigma_{\hat \sigma_a(b)}(\hat \tau_{b}(a)) = a \Rightarrow \hat \sigma_a(b) \circ \hat \tau_b(a) - \hat \sigma_a(b) \circ 0\circ z + z = a \Rightarrow \nonumber\\
& & \hat \sigma_a(b) \circ \hat \tau_b(a) - (0^{-1}-a\circ z^{-1} \circ 0^{-1}+ a\circ b \circ z^{-1} \circ 0^{-1}) \circ 0 \circ z + z =a \nonumber\\
& & \hat \sigma_a(b) \circ \hat \tau_b(a)  - a\circ b  +a -z +z =a \Rightarrow\nonumber\\
 & &  \hat \sigma_a(b) \circ \hat \tau_b(a)  = a\circ b. \nonumber
\end{eqnarray}
Similarly,  $\hat \sigma_{ \sigma_a(b)}( \tau_{b}(a)) = a \Rightarrow \sigma_a(b) \circ  \tau_b(a)  = a\circ b.$
\\

\noindent (ii)  We consider $z_2\circ \xi  =0^{-1},$ $ 0 \circ \xi   = z_1 \circ \xi = z^{-1} \circ 0^{-1},$ and consequently, as shown above, $ \sigma_a(b) \circ  \tau_b(a)  = a\circ b= \hat \sigma_a(b) \circ \hat \tau_b(a).$  We also recall condition 
(\ref{C2}) of the braid equation and $a\circ b = \sigma_a(b)\circ \tau_{b}(a),$ indeed
\begin{equation}
\tau_c(\tau_b(a)) = \sigma_{\tau_b(a)}(c)^{-1} \circ \sigma_a(b)^{-1} \circ a\circ b \circ c \nonumber
\end{equation}
and due to the form of (\ref{C2}) we conclude
\begin{equation}
\sigma_a(b) \circ \sigma_{\tau_b(a)}(c) = \sigma_{a}(\sigma_b(c)) 
\circ \sigma_{\tau_{\sigma_b(c)}(a)}(\tau_c(b)).  \label{s1} 
\end{equation}
We focus on 
\begin{eqnarray}
\sigma_a(b) \circ \sigma_{\tau_b^z(a)}(c) 
&=& \sigma_a(b) \circ (\tau_b(a) \circ c \circ z^{-1} -\tau_b(a)\circ 0 + 1)\circ z 
\nonumber\\
& =&  (a \circ b \circ c\circ z^{-1}  -a\circ b \circ 0 + \sigma_a(b)) \circ z \nonumber\\
& =& (a \circ b \circ c\circ z^{-1}  -a\circ b \circ 0 + a\circ b -a\circ 0 \circ z + z) \circ z.\label{s2}
\end{eqnarray}
Taking into consideration the form of (\ref{s1}) and (\ref{s2}) and the fact that 
$b \circ c = \sigma_c(c) \circ \tau_c(b),$ we conclude that $\forall a \in X,$ 
$-~a\circ 0 + a = \hat \zeta,$ where $\hat \zeta \in X$ is
a fixed element, and for $a=1$ we deduce that $ \hat \zeta =1,$ i.e. $-a\circ 0 + a = 1.$
\end{enumerate}
\end{proof}
\begin{remark} \label{remark2} Due to $a- a\circ 0 =-a\circ 0 +a= 1,$ $\forall a \in B,$ 
we deduce that $a+1 =1+a,$  for all $ a \in B.$ 
\end{remark}

We call the algebraic construction deduced in Theorem  \ref{theorem1}  a  {\it near brace},  in analogy to near rings, specifically:
\begin{definition}
A {\it  near  brace} is a set $B$ together with two group operations $+,\circ :B\times B\to B$, 
the first is called addition and the second is called multiplication, such that for all $ a,b,c\in B$,
\begin{equation}
a\circ (b+c)=a\circ b-a\circ 0+a\circ c,\label{cond2}.
\end{equation}
We denote by $0$ the neutral element of the $(B, +)$ group and 
by $1$ the neutral element of the $(B, \circ)$ group. 
We say that a near brace $B$ is an abelian  near brace if $+$ is abelian. 
\end{definition}

We say that a near brace $B$ is a {\em singular near brace} if for all $a\in B,$ $~a- a\circ 0 =-a\circ 0 +a= 1.$
Near braces will be particularly useful in the next subsection, where we introduce a method of finding solutions depending on multiple parameters.

\noindent In the special case where $0=1,$ we recover a skew brace.  We also show  below some useful properties for near braces.
\begin{lemma} {\cite{DoiRyb}}  \label{lem:ter}
Let $(B,+, \circ)$ be a near brace, then 
\begin{enumerate}
\item $a \circ(-b)=  a\circ 0 - a\circ b +a\circ 0. $
\item \label{lem:ter:truss}Condition \eqref{cond2} is equivalent to the following condition:
$$
\forall{a,b,c,d\in B}\  \ a\circ (b-c+d)=a\circ b-a\circ c+a\circ d.
$$
\end{enumerate}
\end{lemma}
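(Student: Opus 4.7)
The plan for (1) is to apply the defining condition \eqref{cond2} to the sum $b+(-b)=0$. Concretely, by \eqref{cond2},
\[
a\circ 0 \;=\; a\circ(b+(-b)) \;=\; a\circ b - a\circ 0 + a\circ(-b),
\]
which is an equation in the (not necessarily abelian) group $(B,+)$. Solving for $a\circ(-b)$ by adding the appropriate element on the left of both sides gives $a\circ(-b)=a\circ 0 - a\circ b + a\circ 0$, as required. This step is completely routine; the only care needed is to preserve the order of summands because $+$ is not assumed to be abelian.

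For (2), the backward direction is immediate: specializing $c=0$ in the condition $a\circ(b-c+d)=a\circ b - a\circ c + a\circ d$ and using $-0=0$ in the group $(B,+)$ recovers \eqref{cond2} verbatim.

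For the forward direction, I would break $b-c+d$ as $(b+(-c))+d$, apply \eqref{cond2} twice, and then substitute the formula from (1) for $a\circ(-c)$:
\begin{align*}
a\circ(b-c+d) &= a\circ\bigl((b+(-c))+d\bigr) \\
&= a\circ(b+(-c)) - a\circ 0 + a\circ d \\
&= a\circ b - a\circ 0 + a\circ(-c) - a\circ 0 + a\circ d \\
&= a\circ b - a\circ 0 + \bigl(a\circ 0 - a\circ c + a\circ 0\bigr) - a\circ 0 + a\circ d \\
&= a\circ b - a\circ c + a\circ d,
\end{align*}
where the last step uses associativity of $+$ and cancellations of the form $-a\circ 0 + a\circ 0 = 0$ and $a\circ 0 - a\circ 0 = 0$, carried out without permuting the order of terms.

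The main obstacle, if one can even call it that, is the non-abelian nature of the additive group: every cancellation must be performed by inserting inverses in the correct position rather than by rearranging summands. Once one commits to always simplifying left-to-right using only $(-x)+x=0=x+(-x)$, the calculation is mechanical and both parts follow directly from the definition together with (1).
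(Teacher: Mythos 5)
Your proposal is correct and follows essentially the same route as the paper: part (1) via $a\circ(b+(-b))=a\circ 0$, and part (2) via the decomposition $b-c+d=(b+(-c))+d$, two applications of \eqref{cond2}, substitution of (1), and left-to-right cancellation, with the converse obtained by setting the middle term to $0$. No gaps.
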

\begin{proof}
\noindent

\begin{enumerate}
\item $a \circ (b -b) = a \circ 0 \Rightarrow a \circ b- a\circ 0 + a \circ(-b) = a\circ 0, $ 
which leads to $a\circ (-b) = a\circ 0 - a\circ b + a\circ 0.$
\item Let (\ref{cond2}) hold then 
\begin{eqnarray}
& &a\circ ( b -c + d) = a\circ (b-c) -a\circ0 +a\circ d = \cr & &
a \circ b - a\circ0 + a\circ 0 - a\circ c + a\circ 0 -a\circ0 +a\circ d =\cr & &
a\circ b - a\circ c + a\circ d.
\end{eqnarray}
Conversely,  let $a \circ (b-c+d)=a\circ b-a\circ c+a\circ d$ hold, then 
\begin{equation}
a \circ (b+c)= a \circ (b-0+c) = a\circ b-a\circ 0+a\circ c.
\end{equation}
\end{enumerate}
\end{proof}

\begin{example}
Let $(B, \circ)$ be a group with neutral element 1 and define $a +b := a \circ \kappa^{-1} \circ b,$ 
where $1\neq \kappa \in B$ is an element of the center of $(B, \circ).$ Then $(B,+,\circ)$
is a singular near brace with neutral element $0 = \kappa,$ and we call it the trivial near brace 
{\footnote{We are indebted to Paola Stefanelli for sharing this example with us.}} .
\end{example}

\begin{example}\label{ex:complex}
Let us consider the following near-truss introduced in \cite[Page 710]{BrzRybMer}:
$$
\begin{aligned}
\qQ(O(i))= \left\{ \frac{m}{2p+1}+ \frac{n}{2q+1}i\;|\; p,q\in \ZZ, \; \mbox{$m+n$ is an odd integer}\right\}.
\end{aligned}
$$
Then $\qQ(O(i))$ together with operations $a+_ib=a-i+b$ and $a\circ b=a\cdot b$ forms a near brace, where $+,\cdot$ are addition and multiplication of complex numbers, respectively.
\end{example}

\begin{definition}
Let $(B,+,\circ)$ and $(S,+,\circ)$ be near braces. We say that $f:B\to S$ is 
a near brace morphism if for all $a,b\in B$,
$$
f(a+b)=f(a)+f(b)\qquad \&\qquad f(a\circ b)=f(a)\circ f(b).
$$
\end{definition}

\begin{lemma} Let $f:X\to X$ be a map, such that for all
$a,b\in X$ $f(a\circ b-a\circ z+z)=f(a)\circ f(b)-f(a)\circ 0\circ z+z,$ 
and there is $e\in X,$ $f(e)=1.$ If such a map $f$ exists then $0=1.$
\end{lemma}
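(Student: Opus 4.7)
The plan is to exploit two carefully chosen substitutions that cause almost everything to collapse, leaving an identity that forces $0\circ z = z$ in the group $(X,\circ)$, from which $0=1$ is immediate.

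First I would substitute $b=z$ into the defining relation
\[
f(a\circ b-a\circ z+z)=f(a)\circ f(b)-f(a)\circ 0\circ z+z.
\]
The argument of $f$ on the left-hand side then collapses via the $(X,\circ)$-group inverse: $a\circ z-a\circ z+z=z$. So the identity reduces, for every $a\in X$, to
\[
f(z)=f(a)\circ f(z)-f(a)\circ 0\circ z+z.
\]

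Next I would specialise to $a=e$ and use $f(e)=1$ together with the fact that $1$ is the neutral element of $(X,\circ)$:
\[
f(z)=f(z)-0\circ z+z.
\]
Because $(X,+)$ is a group (this is the content of the two lemmas preceding Theorem \ref{theorem1}), I may left-cancel $f(z)$ to obtain $0=-0\circ z+z$, i.e.\ $0\circ z=z$.

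Finally, since $(X,\circ)$ is a group with neutral element $1$, right multiplication by $z^{-1}$ in the $\circ$-operation turns $0\circ z=z=1\circ z$ into $0=1$, as required. There is no real obstacle: the only care needed is checking that the cancellation of $f(z)$ is legitimate, which follows from $(X,+)$ being a group.
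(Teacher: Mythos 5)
Your proof is correct and follows essentially the same route as the paper: substitute $b=z$ so the left-hand argument collapses to $z$, then set $a=e$ with $f(e)=1$, cancel $f(z)$ in the group $(X,+)$ to get $0\circ z=z$, and conclude $0=1$ from the invertibility of $z$ in $(X,\circ)$. No gaps.
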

\begin{proof}
We assume that such map $f:X\to X$ exists. Then for all $a\in X$,
$$
f(z)=f(a\circ z-a\circ z+z)=f(a)\circ f(z)-f(a)\circ 0\circ z+z,
$$ 
and by setting $a=e,$ $f(e)=1$:
$$
f(z)=f(z)-0\circ z+z\implies 0\circ z=z \implies 0=1,
$$
where the last implication follows from the fact that $z$ is invertible.
\end{proof}

\begin{remark} Observe that since every bijection is surjective, the preceding Lemma states
that if $\sigma^{'}_a(b):= a\circ b-a\circ 0 \circ z+z$ gives a solution of YBE,  it is isomorphic to the
solution given by $\sigma_a(b):= a\circ b-a \circ z+z$ if $0 = 1, $ that is near skew brace is a left skew
brace.
\end{remark}


\begin{corollary}
Let $B$ be near brace. Then by the Lemma \ref{lem:ter} \eqref{lem:ter:truss}, the triple $\tT(B):=(B,[-,-,-],\circ)$, where for all $a,b,c\in B,$ $[a,b,c]:=a-b+c,$ is a near-truss such that $(B,\circ)$ is a group. Thus the triple $(B,+_1,\circ)$, where $a +_1 b := a-1+b$ for all $a,b \in B,$ is a left skew brace. That is, the near brace $\tT(B)$ and the left skew brace $(B,+_1,\circ)$ are isomorphic as near-trusses.
\end{corollary}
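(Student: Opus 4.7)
The corollary bundles three assertions: that $\tT(B)$ is a near-truss, that $(B,+_1,\circ)$ is a left skew brace, and that the two are isomorphic as near-trusses. I plan to address them in that order, leveraging the fact that everything reduces to applications of Lemma \ref{lem:ter} \eqref{lem:ter:truss}.

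First I would observe that, since $(B,+)$ is a group, the ternary operation $[a,b,c]:=a-b+c$ is the standard heap operation associated with a group. The near-truss distributivity $a\circ [b,c,d] = [a\circ b, a\circ c, a\circ d]$ is exactly the reformulation of \eqref{cond2} recorded as Lemma \ref{lem:ter} \eqref{lem:ter:truss}, so $(B,[-,-,-],\circ) = \tT(B)$ is a near-truss.

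Next, to show $(B,+_1,\circ)$ is a left skew brace, I would verify that $(B,+_1)$ is a group: associativity is inherited from $+$, the element $1$ is neutral since $1+_1 b = 1-1+b = b$ and $b+_1 1 = b$, and a short computation shows that the inverse of $a$ in $(B,+_1)$ is $\bar a := 1-a+1$. For the skew brace distributivity axiom I would then expand $a\circ(b+_1 c) = a\circ(b-1+c)$ using Lemma \ref{lem:ter} \eqref{lem:ter:truss} together with $a\circ 1 = a$, and separately expand $a\circ b +_1 \bar a +_1 a\circ c$ directly from the definition of $+_1$; both collapse to $a\circ b - a + a\circ c$.

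Finally, the key observation for the isomorphism is that the heap operation of $(B,+_1)$ coincides with the original one on $\tT(B)$: using $\bar b = 1-b+1$, a direct unfolding yields $a -_1 b +_1 c = a - b + c = [a,b,c]$. Hence the identity map $\mathrm{id}\colon B\to B$ preserves both the ternary operation and the multiplication $\circ$, providing the desired isomorphism of near-trusses. The main obstacle, if any, is the bookkeeping in the second step: since $(B,+)$ is not assumed abelian, care is needed when computing the inverse in $(B,+_1)$ and when unfolding multi-term expressions, though Lemma \ref{lem:ter} \eqref{lem:ter:truss} streamlines these routine manipulations.
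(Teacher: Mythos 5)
Your proposal is correct and follows essentially the same route the paper intends: the paper leaves the proof implicit in the statement (invoking Lemma \ref{lem:ter} \eqref{lem:ter:truss} for the near-truss distributivity and the standard heap-retract at $1$ for the skew brace structure), and your write-up simply supplies the routine verifications — the group axioms for $+_1$ with inverse $1-a+1$, the collapse of both sides of the distributivity law to $a\circ b - a + a\circ c$, and the coincidence of the two heap operations so that the identity map is the isomorphism. All of these check out.
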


\begin{lemma} \label{lemmaa}
Let $(X, +, \circ)$ be a near brace and $z,w\in X$ satisfy the right distributivity.  
Consider also the maps $\sigma, \sigma' : X\times X \to X$ such that
$\sigma_a(b)= a\circ b -a \circ 0 \circ z +z $ and $\sigma'_a(b)= 
a\circ b -a \circ 0 \circ w +w.$ If $\sigma_a(b)= \sigma'_a(b)$ then $z^{-1}\circ w -1= w-z.$
\end{lemma}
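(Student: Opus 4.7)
The plan is to equate the two expressions for $\sigma_a(b)$ and strip away the common factors using the group structure of $(X,+)$ established earlier (Lemma \ref{lem:bij} and Remark \ref{rr1}). Specifically, from $\sigma_a(b)=\sigma'_a(b)$ we get
\[
a\circ b - a\circ 0\circ z + z \;=\; a\circ b - a\circ 0\circ w + w,
\]
and since $+$ is a group operation, left cancellation by $a\circ b$ yields
\[
-a\circ 0\circ z + z \;=\; -a\circ 0\circ w + w
\]
for all $a\in X$.

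Next I would move $a\circ 0\circ w$ to the left side: adding it on the left and using associativity together with the additive inverse axiom gives
\[
a\circ 0\circ w - a\circ 0\circ z + z \;=\; w,
\]
which after subtracting $z$ on the right becomes
\[
a\circ 0\circ w - a\circ 0\circ z \;=\; w - z.
\]

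Now the key observation: since $(X,\circ)$ is a group and $0\in X$ is fixed, the map $a\mapsto a\circ 0$ is a bijection of $X$. Writing $c:=a\circ 0$, the identity above holds for every $c\in X$. Specializing to $c=z^{-1}$ yields
\[
z^{-1}\circ w - z^{-1}\circ z \;=\; w-z,
\]
i.e.\ $z^{-1}\circ w - 1 = w - z$, as required.

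There is no real obstacle here: the only subtleties are (i) applying additive cancellation (guaranteed by Lemma \ref{lem:bij}) and (ii) noting that $a\mapsto a\circ 0$ is surjective so that we may freely substitute $c=z^{-1}$. No use of the right-distributivity of $z,w$ is needed beyond what is already built into the definition of $\sigma,\sigma'$.
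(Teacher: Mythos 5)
Your proof is correct and is essentially the paper's argument: the paper simply sets $a=z^{-1}\circ 0^{-1}$ at the outset, which is exactly your substitution $c=a\circ 0=z^{-1}$ after the cancellations. The only difference is that you spell out the additive cancellation steps and the surjectivity of $a\mapsto a\circ 0$ explicitly, which the paper leaves implicit.
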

\begin{proof} 
The proof is straightforward by setting $a = z^{-1}\circ 0^{-1}$ in both $\sigma_a(b)$ and $\sigma'_a(b).$ 
\end{proof}

\subsection{Generalized bijective maps $\&$ solutions of the braid equation}

Inspired by the findings of the preceding section we introduce below more general, multi-parametric bijective 
maps $\sigma^p_a,  \tau^p_b$  ($p$ stands for parametric) that provide solutions of the set-theoretic braid equation.

\begin{proposition}\label{lem:long2}
Let $(B,+, \circ)$ be a near  brace and let us denote $\sigma^p_a(b):=a\circ b\circ z_1-a \circ \xi + z_2$ 
and $\tau^{p}_b(a):=\sigma^p_{a}(b)^{-1}\circ a\circ b$, where $a,b\in B$,
and $h\in \{\xi,z_i\} \in B,$ $i \in \{1,2\}$ are fixed parameters,  such that there exist $c_1,c_2 \in B$ such that for all $ a,b,c \in B,$ $(a-b+c)\circ h=a\circ h-b\circ h+c\circ h,$ 
$~a\circ z_2\circ z_1 - a\circ \xi =c_1$ and $- a\circ \xi  + a\circ z_1\circ z_2 = c_2.$ 

Then for all $ a,b,c\in B$ the following properties hold:
\begin{enumerate}
\item $\sigma^p_{a}(b)\circ\tau^p_{b}(a)= a\circ b.$\label{lem:long:eq:5b}
\item $\sigma^p_{a}(\sigma^p_{b}(c))=a\circ b\circ c\circ z_1 \circ z_1-a\circ b\circ \xi \circ z_1 + c_1 + z_2 $.\label{lem:long:eq:2b}
\item $\sigma^p_{a}(b) \circ \sigma^p_{\tau^p_{b}(a)}(c)= a \circ b \circ c \circ z_1 +c_2  - a \circ \xi \circ z_2 +z_2\circ z_2.$
\label{lem:long:eq:6b}
\end{enumerate}
\end{proposition}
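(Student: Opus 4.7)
My plan is to verify each of the three identities by direct computation, using the near-brace left distributivity in its symmetric form from Lemma \ref{lem:ter}\eqref{lem:ter:truss} together with the hypothesized right distributivity of the fixed parameters $\xi, z_1, z_2$ and the two contraction identities involving $c_1$ and $c_2$.

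Part \eqref{lem:long:eq:5b} is essentially definitional: since $\tau^p_{b}(a)=\sigma^p_{a}(b)^{-1}\circ a\circ b$ and $\circ$ is a group operation, I multiply through on the left by $\sigma^p_{a}(b)$ and the inverses cancel, giving $\sigma^p_{a}(b)\circ\tau^p_{b}(a)=a\circ b$. No near-brace structure is used here.

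For part \eqref{lem:long:eq:2b} I first substitute $\sigma^p_{b}(c)=b\circ c\circ z_1-b\circ \xi+z_2$ into $\sigma^p_{a}(-)$. The outer factor $a\circ(\,\cdot\,)$ is distributed over the ternary expression $b\circ c\circ z_1-b\circ \xi+z_2$ via Lemma \ref{lem:ter}\eqref{lem:ter:truss}, producing $a\circ b\circ c\circ z_1-a\circ b\circ \xi+a\circ z_2$. I then multiply on the right by $z_1$, using the hypothesized right distributivity of $z_1$ to distribute through the three terms, and finally subtract $a\circ \xi$ and add $z_2$. The trailing combination $a\circ z_2\circ z_1 - a\circ \xi$ collapses to $c_1$ by assumption, yielding the stated formula.

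For part \eqref{lem:long:eq:6b} I expand $\sigma^p_{\tau^p_{b}(a)}(c)=\tau^p_{b}(a)\circ c\circ z_1-\tau^p_{b}(a)\circ \xi+z_2$, multiply on the left by $\sigma^p_{a}(b)$, and distribute through the three terms of the bracket using Lemma \ref{lem:ter}\eqref{lem:ter:truss}. In the first two resulting summands the product $\sigma^p_{a}(b)\circ\tau^p_{b}(a)$ appears and is replaced by $a\circ b$ via part \eqref{lem:long:eq:5b}. In the third summand I expand $\sigma^p_{a}(b)\circ z_2$ using the right distributivity of $z_2$. Re-grouping and applying the identity $-a\circ b\circ \xi + a\circ b\circ z_1\circ z_2=c_2$ (the hypothesis on $c_2$ applied with $a$ replaced by $a\circ b$) produces the stated right-hand side.

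The only mildly delicate point is the bookkeeping in part \eqref{lem:long:eq:6b}: the terms arising from expanding $\sigma^p_{a}(b)\circ z_2$ must be correctly slotted between the $a\circ b\circ c\circ z_1$ and the $-a\circ b\circ \xi$ terms so that the pair that collapses to $c_2$ is adjacent in the $(+,-)$-string, which is legitimate because $+$ is a group operation (hence associative). No new algebraic input beyond Lemma \ref{lem:ter} and the stated hypotheses is needed.
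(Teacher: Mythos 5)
Your proposal is correct and follows essentially the same route as the paper's proof: part (1) by cancellation, part (2) by distributing $a\circ(-)$ over the ternary bracket and then right-distributing $z_1$ so that $a\circ z_2\circ z_1-a\circ\xi$ collapses to $c_1$, and part (3) by distributing $\sigma^p_a(b)\circ(-)$, substituting $a\circ b$ for $\sigma^p_a(b)\circ\tau^p_b(a)$, and expanding $\sigma^p_a(b)\circ z_2$ so that $-a\circ b\circ\xi+a\circ b\circ z_1\circ z_2$ collapses to $c_2$. The ``bookkeeping'' worry you raise in part (3) is moot: the term $a\circ b\circ z_1\circ z_2$ lands immediately after $-a\circ b\circ\xi$ in the string, so the $c_2$-pair is adjacent without any rearrangement.
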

\begin{proof}
Let $a,b,c \in B$, then:

\begin{enumerate} 

\item $\szp{a}{b}\circ\tzp{b}{a}=\szp{a}{b}\circ \szp{a}{b}^{-1}\circ a\circ b= a\circ b.$

$ $

\item To show condition (2) we recall that $a\circ z_2\circ z_1 - a\circ \xi =c_1.$ Then,

\begin{eqnarray}
\szp{a}{\szp{b}{c}}&=&\szp{a}{b\circ c\circ z_1- b\circ \xi + z_2}\nonumber\\
&=&a\circ (b\circ c\circ z_1-b\circ \xi+  z_2)\circ z_1-a\circ \xi +z_2\nonumber\\ 
&=&a\circ b\circ c\circ z_1 \circ z_1-a\circ b\circ \xi \circ z_1+a\circ z_2 \circ z_1-a\circ \xi + z_2 \nonumber \\ 
&=&a\circ b\circ c\circ z_1 \circ z_1-a\circ b\circ \xi \circ z_1 + c_1 + z_2. \nonumber 
\end{eqnarray}

$ $

\item To show  condition (3) we use  (1) and $ - a\circ \xi  + a\circ z_1\circ z_2 = c_2.$

$$
\begin{aligned}
\sigma^p_{a}(b) \circ \sigma^p_{\tau^p_{b}(a)}(c)&=\szp{a}{b}\circ(\tzp{b}{a}\circ c \circ z_1-
 \tzp{b}{a} \circ \xi+z_2)\\ &=\szp{a}{b}\circ \tzp{a}{b}\circ c \circ z_1- \szp{a}{b}\circ \tzp{a}{b}\circ \xi+
\szp{a}{b}\circ z_2 \\ 
&= a\circ b \circ c    \circ z_1-a\circ b \circ \xi+ \szp{a}{b}\circ z_2\\ &=a\circ b\circ c \circ z_1-a\circ b\circ \xi+ 
(a\circ b\circ z_1-a\circ \xi +z_2)\circ z_2\\ &=a \circ b \circ c \circ z_1 +c_2  - a \circ \xi \circ z_2 +z_2\circ z_2.
\end{aligned} 
$$
\hfill \qedhere
\end{enumerate}
\end{proof}

\begin{example} 
A simple example of the above  generic maps is  the case where $z_1 \circ z_2 = \xi\circ 0 = z_2 \circ z_1, $ then $c_1 = c_2 =1.$  
\end{example}
\noindent Having showed the fundamental properties above we may now proceed in proving the following  theorem.

\begin{theorem}\label{prop:main}
Let $(B, +,\circ)$ be a near brace and $z\in B$ such that there exist $ c_1,c_2\in B$ such that for all $a,b,c\in B$, $(a-b+c)\circ z_i=a\circ z_i-b\circ z_i+c\circ z_i,$ $i \in \{1,2\},$ $a\circ z_2\circ z_1 - a\circ \xi =c_1$ and $- a\circ \xi  + a\circ z_1\circ z_2 = c_2.$
We define a map $\check{r}:B\times B\to B\times B$ given by
$$
\check{r}(a,b)=(\szp{a}{b},\tzp{b}{a}),
$$
where $\szp{a}{b}=a\circ b\circ z_1-a\circ \xi+z_2, $  $\tzp{b}{a}= \szp{a}{b}^{-1} \circ a \circ b.$ The pair $(B,\check{r})$ 
is a solution of the braid equation.
\end{theorem}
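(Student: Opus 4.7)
The plan is to verify the three fundamental conditions \eqref{C1}--\eqref{C3} of the braid equation for the pair $(\szp{a}{b},\tzp{b}{a})$, exploiting the three identities of Proposition~\ref{lem:long2}: the ``multiplicativity'' $\szp{a}{b}\circ\tzp{b}{a}=a\circ b$ of \eqref{lem:long:eq:5b}, and the two closed-form expressions \eqref{lem:long:eq:2b} and \eqref{lem:long:eq:6b} for the nested compositions $\szp{a}{\szp{b}{c}}$ and $\szp{a}{b}\circ\szp{\tzp{b}{a}}{c}$.

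First I would tackle \eqref{C1}. The right-hand side of property~\eqref{lem:long:eq:2b} depends on $a,b$ only through the product $a\circ b$. Hence to check $\szp{\eta}{\szp{x}{y}}=\szp{\szp{\eta}{x}}{\szp{\tzp{x}{\eta}}{y}}$ it suffices to note, via property~\eqref{lem:long:eq:5b}, that $\szp{\eta}{x}\circ\tzp{x}{\eta}=\eta\circ x$, so substituting the two pairs of arguments into \eqref{lem:long:eq:2b} yields identical expressions.

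Next I would attack \eqref{C3}. After left-multiplying each side by the common element $\szp{\eta}{\szp{x}{y}}=\szp{\szp{\eta}{x}}{\szp{\tzp{x}{\eta}}{y}}$ supplied by \eqref{C1}, property~\eqref{lem:long:eq:5b} collapses the two sides respectively to $\szp{\eta}{x}\circ\szp{\tzp{x}{\eta}}{y}$ and $\szp{\eta}{\szp{x}{y}}\circ\szp{\tzp{\szp{x}{y}}{\eta}}{\tzp{y}{x}}$. Property~\eqref{lem:long:eq:6b} then computes both of these explicitly — with $(a,b,c)=(\eta,x,y)$ in the first case and $(a,b,c)=(\eta,\szp{x}{y},\tzp{y}{x})$ in the second — and the identity $\szp{x}{y}\circ\tzp{y}{x}=x\circ y$ from \eqref{lem:long:eq:5b} makes the two outputs coincide. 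Left-cancellation in the group $(B,\circ)$ then delivers \eqref{C3}.

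Finally, \eqref{C2} emerges as a bookkeeping consequence. Iterating property~\eqref{lem:long:eq:5b} three times telescopes both $L_1\circ L_2\circ L_3$ and $R_1\circ R_2\circ R_3$ (the components of the two sides of the braid equation, as in the paper's earlier derivation) down to the common element $\eta\circ x\circ y$. Combined with the equality $L_1\circ L_2=R_1\circ R_2$ obtained in the previous paragraph, a single left-cancellation yields $L_3=R_3$, which is precisely \eqref{C2}. The main obstacle is indexing rather than algebra: one must recognise that properties~\eqref{lem:long:eq:2b} and \eqref{lem:long:eq:6b} apply not only with the ``obvious'' arguments $(\eta,x,y)$ but also with the permuted arguments $(\eta,\szp{x}{y},\tzp{y}{x})$, with \eqref{lem:long:eq:5b} acting as the bridge that makes the two substitutions produce the same output.
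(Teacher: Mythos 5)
Your proposal is correct and follows essentially the same route as the paper: condition \eqref{C1} falls out of Proposition~\ref{lem:long2}(1)--(2) because the closed form in (2) depends on its first two arguments only through their product, and conditions \eqref{C2}--\eqref{C3} follow from (1) and (3) by the same telescoping/cancellation in $(B,\circ)$ that the paper performs (the paper writes the cancellation as multiplication by explicit inverses $T^p, t^p$ and treats \eqref{C2} before \eqref{C3}, but the content is identical). No gaps.
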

\begin{proof}
To prove this we need to show that the maps $\sigma, \tau$ satisfy the constraints (\ref{C1})-(\ref{C3}).  
To achieve this we use the properties proven in Proposition \ref{lem:long2}. 

Indeed, from Proposition \ref{lem:long2}, (1) and (2), it follows that (\ref{C1}) is satisfied, i.e. 
\[\szp{\eta}{\szp{x}{y}}=\szp{\szp{\eta}{x}}{\szp{\tzp{x}{\eta}}{y}}.\]

We observe that
\begin{equation}
\tau^p_{b}(\tau^p_a(\eta)) = T^p\circ \tau^p_{a}(\eta)\circ b = T^p \circ t^p \circ \eta \circ a \circ b = 
T^p \circ t^p \circ \eta \circ \szp{a}{b} \circ \tzp{b}{a}, \nonumber
\end{equation}
where $T^p= \szp{\tz{a}{\eta}}{b}^{-1}$ and $t^p= \sigma^p_{\eta}(a)^{-1} $ (the inverse in the  circle group). 
Due to (1),  (2),  (3)  of Proposition \ref{lem:long2} we then conclude that 
\[\tau^p_{b}(\tau^p_a(\eta)) =\tau^p_{\tzp{b}{a}}(\tau^p_{\szp{a}{b}}(\eta)),\] 
so (\ref{C2}) is also satisfied. 

To prove (\ref{C3}),  we employ (3),  (1) of Proposition \ref{lem:long2} and use the definition of $\tau^p$, 
$$
\sigma^p_{\tau^p_{\sigma^p_x(y)}(\eta)}(\tau^p_{y}(x))=\szp{\eta}{\sigma_x^p(y)}^{-1} \circ\sigma^p_{\eta}(x) \circ 
\sigma^p_{\tau^p_{x}(\eta)}(y)=\tzp{\szp{\tzp{x}{\eta}}{y}}{\szp{\eta}{x}}.
$$
Thus, (\ref{C3}) is satisfied,  and $\check{r}(a,b)=(\szp{a}{b},\tzp{b}{a})$ is a solution of the braid equation.
\end{proof}

\begin{lemma} \label{lemmab}
Let $(B, +, \circ)$ be a near brace and $z,w\in B$ satisfy the right distributivity.  Consider also the multi-parametric maps $\sigma^p,  \sigma^{'p} : B\times B \to B$ as defined in Proposition \ref{lem:long2}, such that
$\sigma^p_a(b)= a\circ b\circ z_1 -a \circ \xi +z_2 $ and $\sigma^{'p}_a(b)= a\circ b\circ z_2 -a \circ \xi +z_1.$ If $\sigma_a(b)= \sigma'_a(b)$ then $0\circ z_1^{-1}\circ z_2= z_2-z_1.$
\end{lemma}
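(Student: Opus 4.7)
The plan is to mimic the trick used in Lemma \ref{lemmaa}: specialise $a$ and $b$ on both sides of the hypothesis $\sigma^p_a(b) = \sigma'^p_a(b)$ so that nearly every summand collapses to a neutral element, leaving a direct relation involving $0$, $z_1$, $z_2$.

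Writing the hypothesis explicitly, I have
\begin{equation*}
a\circ b\circ z_1 - a\circ \xi + z_2 \;=\; a\circ b\circ z_2 - a\circ \xi + z_1, \qquad \forall\, a,b\in B.
\end{equation*}
Since $(B,\circ)$ is a group I am free to take $a := 0\circ \xi^{-1}$ and $b := \xi\circ z_1^{-1}$. This choice is engineered so that $a\circ \xi = 0\circ\xi^{-1}\circ\xi = 0$ (the additive identity), while $a\circ b = 0\circ z_1^{-1}$, so $a\circ b\circ z_1 = 0$ and $a\circ b\circ z_2 = 0\circ z_1^{-1}\circ z_2$.

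Substituting, the left-hand side becomes $0 - 0 + z_2 = z_2$ and the right-hand side becomes $0\circ z_1^{-1}\circ z_2 - 0 + z_1 = 0\circ z_1^{-1}\circ z_2 + z_1$, using that $-0=0$ in the group $(B,+)$. Equating and cancelling $z_1$ on the right in $(B,+)$ yields $0\circ z_1^{-1}\circ z_2 = z_2 - z_1$, as required.

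There is no genuine obstacle here: the whole argument reduces to selecting the right value of $a$ (one that sends $a\circ\xi$ to the additive neutral element) and the right value of $b$ (one that makes $a\circ b\circ z_1$ collapse). The right-distributivity hypothesis on $z_1$ and $z_2$ is not even invoked in this derivation — it is needed elsewhere to make the maps $\sigma^p$, $\sigma^{'p}$ solutions of the braid equation, but the implication stated in the lemma follows purely from the group structure of $(B,\circ)$ and $(B,+)$.
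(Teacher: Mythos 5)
Your proof is correct and is exactly the paper's argument: the authors' proof consists precisely of the hint to substitute $a = 0\circ\xi^{-1}$ and $b = \xi\circ z_1^{-1}$, and your computation carries that out in full. Your side observation that right distributivity of $z_1,z_2$ is not actually invoked in this particular deduction is also accurate.
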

\begin{proof}
The proof is straightforward by setting $a =0\circ  \xi^{-1}$ and $b=\xi\circ z_1^{-1}$ in both $\sigma^p_a(b)$ and $\sigma^{'p}_a(b).$
\end{proof}

\begin{remark} In the special case where  $z_1=1$ and $\xi = z_2 =z$ we recover the 
$\sigma^z_x(y), \ \tau_y^z(x)$ bijective maps and the $\check r_z$ solutions of the braid equation introduced in \cite{DoiRyb}.
\end{remark}

\begin{example}
We consider the brace $\qQ(O(i))$ from Example \ref{ex:complex}. Then, we can have  for instance the following choice of parameters:

\begin{enumerate}
\item $z_1=z_2=i,\  c_1 =c_2=i,\ \xi=-1, $ then $\sigma^p_a(b)=a\circ b\circ i+_ia$\\
\item $z_1=i,\  z_2=-i,\ c_1 =c_2=i,\ \xi=1, $ then $\sigma^p_a(b)=a\circ b\circ i-_ia$\\
\item $z_1=5,\  z_2=3,\  c_1 =c_2=i,\ \xi=15, $ then $\sigma^p_a(b)=a\circ b\circ 5-_i15\circ a+_i3.$
\end{enumerate}

\end{example}

In the following Proposition we provide the explicit expressions of the inverse $\check r$-matrices as 
well as the corresponding bijective maps.
\begin{proposition} \label{rem3} Let $\check r^*,\check r: B \times B \to B \times B$ 
such that $\check r^*: (x,y )\mapsto (\hat \sigma^{p}_x(y),  \hat \tau^{p}_y(x)),$ 
$\check r: (x, y) \mapsto (\sigma^{p}_x(y),   \tau^{p}_y(x))$ be solutions of the braid equation. Then the following statements hold.

\begin{enumerate} 
\item $\check r^* = \check r^{-1}$ if and only if 
\begin{eqnarray}
\hat \sigma^{p}_{\sigma_x^p(y)}(\tau_y^p(x)) =x, \  \hat \tau^{p}_{\tau^p_{y}(x)}(\sigma^p_{x}(y)) = 
y \ \mbox{and} \ \sigma^{p}_{\hat \sigma_x^{p}(y)}(\hat \tau_y^{p}(x)) =x,  \ \tau^{p}_{\hat \tau^{p}_{y}(x)}(\hat \sigma^{p}_{x}(y)) = y.   \label{k2}
\end{eqnarray}

\item Let $\sigma^p_x(y) = x\circ y \circ z_1- x\circ \xi +z_2,$ $\ \tau^p_y(x) = \sigma^p_x(y)^{-1}\circ x \circ y,$ 
and $\xi, z_i\in B$ $i \in \{1,2\}$ are fixed elements,  such that there exists $c_1,c_2 \in B$ such that for all  $a,b,c \in B,$ $(a-b+c)\circ z_i=a\circ z_i-b\circ z_i+c\circ z_i,$ 
$a\circ z_2\circ z_1 - a\circ \xi =c_1$ and $ - a\circ \xi  + a\circ z_1\circ z_2 = c_2.$Then $\hat\sigma^{p}_x(y)= \hat z_2 -x \circ \hat \xi + x\circ y \circ \hat z_1, $ 
$\ \hat \tau^p_x(y) =\hat\sigma^{p}_x(y)^{-1}\circ x \circ y,$ where $\hat \xi = \xi^{-1}, $ 
$\hat z_{1,2} = z_{1,2} \circ \xi^{-1}.$ 
\end{enumerate}
\end{proposition}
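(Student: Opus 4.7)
The plan is to treat part (1) as a direct unpacking of what it means to be an inverse and then to verify part (2) by substituting the ansatz into the four inverse conditions just established.

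For part (1), I would simply compose
$$\check r^{*}\circ \check r (x,y)=\bigl(\hat\sigma^{p}_{\sigma^{p}_{x}(y)}(\tau^{p}_{y}(x)),\ \hat\tau^{p}_{\tau^{p}_{y}(x)}(\sigma^{p}_{x}(y))\bigr),$$
and compare componentwise with $(x,y)$ to obtain the first pair of identities in \eqref{k2}; the same calculation on $\check r\circ \check r^{*}$ yields the second pair. Since $\check r$ is bijective, the existence of a one-sided inverse forces $\check r^{*}=\check r^{-1}$, so $\check r^{*}=\check r^{-1}$ is equivalent to the four identities together.

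For part (2), I would verify the four identities of part (1) for the proposed $\hat\sigma^{p}$. Two of them, $\hat\tau^{p}_{\tau^{p}_{y}(x)}(\sigma^{p}_{x}(y))=y$ and $\tau^{p}_{\hat\tau^{p}_{y}(x)}(\hat\sigma^{p}_{x}(y))=y$, are essentially tautologies: by Proposition \ref{lem:long2}(1), $\sigma^{p}_{x}(y)\circ\tau^{p}_{y}(x)=x\circ y$, and by construction $\hat\sigma^{p}_{x}(y)\circ\hat\tau^{p}_{y}(x)=x\circ y$; plugging these products into the definition of the opposing $\tau$-map collapses the expression to $y$. The nontrivial identities are $\hat\sigma^{p}_{\sigma^{p}_{x}(y)}(\tau^{p}_{y}(x))=x$ and $\sigma^{p}_{\hat\sigma^{p}_{x}(y)}(\hat\tau^{p}_{y}(x))=x$. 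For the first, I would set $u=\sigma^{p}_{x}(y)$ and $v=\tau^{p}_{y}(x)$, so $u\circ v=x\circ y$, and expand
$$\hat\sigma^{p}_{u}(v)=\hat z_{2}-u\circ\hat\xi+u\circ v\circ\hat z_{1}=\hat z_{2}-(x\circ y\circ z_{1}-x\circ\xi+z_{2})\circ\xi^{-1}+x\circ y\circ z_{1}\circ\xi^{-1}.$$
Right distributivity of $\xi^{-1}$, inherited from that of $\xi$ via Proposition 2.3 of \cite{DoiRyb}, together with $\xi\circ\xi^{-1}=1$, turns the middle bracket into $x\circ y\circ z_{1}\circ\xi^{-1}-x+z_{2}\circ\xi^{-1}$; applying the near-brace identity $-(a-b+c)=-c+b-a$ (Lemma \ref{lem:ter}) and the prescribed value $\hat z_{2}=z_{2}\circ\xi^{-1}$, the $x\circ y\circ z_{1}\circ\xi^{-1}$ summands cancel and what remains collapses to $x$. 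The third identity is handled symmetrically by setting $u'=\hat\sigma^{p}_{x}(y)$, $v'=\hat\tau^{p}_{y}(x)$ so that $u'\circ v'=x\circ y$, computing $u'\circ\xi$ via right distributivity of $\xi$ and $\hat\xi\circ\xi=1$, and substituting into $\sigma^{p}_{u'}(v')=u'\circ v'\circ z_{1}-u'\circ\xi+z_{2}$.

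The main obstacle is purely bookkeeping: the additive group of a near brace is non-abelian, so orderings of $+$ and $-$ must be preserved throughout, and one must apply $-(a-b+c)=-c+b-a$ rather than flipping signs term by term. One also needs to check at the outset that the right-distributivity hypotheses carry over from $\xi,z_{1},z_{2}$ to $\xi^{-1},\hat z_{1},\hat z_{2}$, which is the content of Proposition 2.3 of \cite{DoiRyb} invoked above; once this is in hand the whole computation reduces to the two cancellations sketched.
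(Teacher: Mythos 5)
Your proposal is correct and follows essentially the same route as the paper: part (1) by composing $\check r^*\circ\check r$ and $\check r\circ\check r^*$ componentwise, and part (2) by substituting the ansatz into the four identities and cancelling via right distributivity of $\xi^{\pm 1}$ together with $\sigma^p_x(y)\circ\tau^p_y(x)=x\circ y=\hat\sigma^p_x(y)\circ\hat\tau^p_y(x)$. Your explicit remark that right distributivity must be transferred from $\xi,z_1,z_2$ to $\xi^{-1},\hat z_1,\hat z_2$ is a useful point the paper's proof leaves implicit.
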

\begin{proof} We prove the two parts of Proposition \ref{rem3} below:
\begin{enumerate}
\item If $\check r^* = \check r ^{-1},$ then $\check r \check r^* = \check r^* \check 
r= \mbox{id}$ and 
\begin{equation}
\check r \check r^*(x, y) = ( \sigma^p_{\hat \sigma_x(y)}(\hat \tau^p_{y}(x)),   
\tau^{p}_{\hat \tau^p_{y}(x)}(\hat \sigma^p_{x}(y))). \nonumber
\end{equation}
Thus  
\begin{equation}
\sigma^{z}_{\hat \sigma_x^{p}(y)}
(\hat \tau_y^{p}(x)) =x, \quad \tau^{p}_{\hat \tau^{p}_{y}(x)}(\hat \sigma^{ z}_{x}(y)) = y. \nonumber
\end{equation} 
And vice versa if $\sigma^{p}_{\hat \sigma_x^{p}(y)}(\hat \tau_y^{p}(x)) =
x, $\ $ \tau^{p}_{\hat \tau^{p}_{y}(x)}(\hat \sigma^{p}_{x}(y)) = y,$ 
then it automatically follows that $\check r^{*} = \check r^{-1}.$ 

Similarly,  $\check r^* \check r(x,y)= (x, y)$ leads to 
$\hat \sigma^{p}_{\sigma_x^p(y)}(\tau_y^p(x)) =x, $ $\hat \tau^{p}_{\tau^p_{y}(x)}(\sigma^p_{x}(y)) = y,$ 
and vice versa.

\item For the second part of the Proposition it suffices to show (\ref{k2}). Indeed, we recall that that $ \hat \xi = \xi^{-1}$ 
and $\hat z_{1,2} =z_{1,2} \circ \xi^{-1},$ then
\begin{eqnarray}
 \hat \sigma^p_{\sigma^p_x(y)}(\tau^p_y(x)) &=& \hat z_2 -\sigma^p_x(y) \circ \hat \xi+ \sigma^p_x(y)\circ \tau_y^p(x) \circ \hat  z_1 \nonumber\\
&=&  \hat z_2
-\sigma^p_x(y) \circ \hat \xi + x \circ y \circ \hat z_2 \nonumber\\
&=& \hat z_2 - (x\circ y \circ z_1 - x\circ \xi +z_2)\circ \hat \xi +  x \circ y \circ \hat z_1 = x. \nonumber
\end{eqnarray}
Also, $\hat \tau^{p}_{\tau^p_{y}(x)}(\sigma^p_{x}(y))  = x^{-1} \circ \sigma^p_x(y) \circ \tau_y^p(x)= y. $

Similarly,  we show
\begin{eqnarray}
\sigma^{p}_{\hat \sigma_x^{p}(y)}(\hat \tau_y^{p}(x))  &=& 
 \hat \sigma^p_x(y)\circ \hat \tau_y^p(x)\circ z_1 -  \hat \sigma^p_x(y) \circ \xi +z_2 \nonumber\\
&=& x \circ y \circ z_1 - (\hat z_2 -x\circ \hat \xi +x\circ y \circ \hat  z_1)\circ \xi
 +z_2  = x.\nonumber
\end{eqnarray}
And as above we immediately deduce that $\tau^{p}_{\hat \tau^{p}_{y}(x)}(\hat \sigma^{p}_{x}(y)) = y.$
\hfill \qedhere
\end{enumerate}
\end{proof}

With this we conclude our analysis on the general bijective maps coming from near  
braces and the corresponding solutions of the braid equation.

\subsection{$p$-deformed braided groups and near braces}
\noindent Motivated by the definition of braided groups and braidings in \cite{chin} 
as well as the relevant work presented in \cite{GatMaj} we provide a generic definition of the $p$-deformed braided group and braiding
that contain extra fixed parameters, i.e.  for multi-parametric braidings ($p$-braidings).

\begin{definition} \label{defp}
Let $(G,\circ)$ be a group,  $m(x,y) = x \circ y$ and $\check r$ is an invertible map $\check r: G \times G \to G \times G,$ such that for all $ x, y \in G, $ $\check r(x,y) = (\sigma^p_x(y), \tau^p_y(x)),$ where $\sigma^p_x,\ \tau^p_y$ 
are bijective maps in $G$.  The map $\check r$ is called  a $p$-braiding operator 
(and the group is called $p$-braided) if
\begin{enumerate}
\item $x\circ y = \sigma^p_x(y) \circ \tau^p_y(x).$
\item $  (\mbox{id} \times m)\ \check r_{12}\ \check r_{23} (x,y,w)= (f^p_{x\circ y}(w), \ f^p_{x\circ y}(w)^{-1}\circ x \circ y\circ w).$
\item $  (m \times \mbox{id})\ \check r_{23}\ \check r_{12}(x,y,w)= (g^p_x(y\circ w), \  g^p_x(y\circ w)^{-1}\circ x \circ y\circ w).$
\end{enumerate}
for some bijections $f_x^p, g_x^p: G \to G,$ given for all $x \in G.$
\end{definition}

\begin{proposition} \label{prop2} 
Let $(G,\circ)$ be a group, and the invertible map $\check r: G \times G \to G \times G,$ $\check r(x,y) = 
(\sigma^p_x(y), \tau^p_y(x)),$  be a $p$-braiding operator for the group $G.$ Then $\check r$ 
is a non-degenerate solution of the braid equation.
\end{proposition}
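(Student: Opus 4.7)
My plan is to verify the three constraints \eqref{C1}--\eqref{C3} by contracting the braid equation with the group multiplication on adjacent pairs of coordinates, invoking conditions (1)--(3) of Definition~\ref{defp}. Non-degeneracy of $\check r$ is immediate from the bijectivity of $\sigma_x^p$ and $\tau_y^p$, so all the real work concerns the braid identity itself.

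The first step will be to record two \emph{collapse identities} which are consequences of condition (1), $x\circ y=\sigma_x^p(y)\circ\tau_y^p(x)$:
\[
(m\times\id)(\check r\times\id)\;=\;m\times\id, \qquad (\id\times m)(\id\times\check r)\;=\;\id\times m.
\]
Composing $m$ with an adjacent $\check r$ erases that $\check r$; in particular the total product $x\circ y\circ w$ is preserved by each of the three $\check r$'s occurring in the braid equation, so knowing any two of the three output coordinates of either side determines the third.

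Second, I will apply $m\times\id$ to both sides of the braid equation. On the LHS $(\check r\times\id)(\id\times\check r)(\check r\times\id)$ the outer $\check r\times\id$ is absorbed by the first collapse identity, reducing the contracted LHS to $(m\times\id)\check r_{23}\check r_{12}(x,y,w)$; by condition (3) this equals $(g_x^p(y\circ w),\,g_x^p(y\circ w)^{-1}\circ x\circ y\circ w)$. On the RHS $(\id\times\check r)(\check r\times\id)(\id\times\check r)$ I first compute $\check r_{23}(x,y,w)=(x,\sigma_y^p(w),\tau_w^p(y))$, whose total product is still $x\circ y\circ w$ by (1); applying $(m\times\id)\check r_{23}\check r_{12}$ to this triple and invoking condition (3) once more returns the identical pair. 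Equating the two outputs gives $L_3=R_3$, which is \eqref{C2}, together with $L_1\circ L_2=R_1\circ R_2$.

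A symmetric contraction with $\id\times m$ and an appeal to condition (2) will similarly produce $L_1=R_1$, i.e.\ \eqref{C1}, together with $L_2\circ L_3=R_2\circ R_3$. Combining $L_1=R_1$ with $L_1\circ L_2=R_1\circ R_2$ and cancelling in the group $(G,\circ)$ then forces $L_2=R_2$, which is \eqref{C3}. The only genuinely non-routine point is conceptual: conditions (2) and (3) have been calibrated precisely so that both sides of the braid equation collapse, after contraction with $\id\times m$ or $m\times\id$, to the \emph{same} expression in $f^p$ or $g^p$, irrespective of the actual form of these bijections; everything else is group cancellation in $G$.
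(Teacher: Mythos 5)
Your proof is correct and follows essentially the same route as the paper: conditions (2) and (3) of Definition~\ref{defp}, combined with the product-preservation coming from condition (1) (which is exactly what makes $f^p_{x\circ y}=f^p_{\sigma^p_x(y)\circ\tau^p_y(x)}$ and lets the outer $\check r$'s be absorbed), yield \eqref{C1} and \eqref{C2}, and \eqref{C3} then follows by cancellation in $(G,\circ)$. Your packaging via explicit collapse identities and contraction of the full braid equation is only a tidier organization of the computation the paper performs on $(\id\times m)\,\check r_{12}\check r_{23}$ and $(m\times\id)\,\check r_{23}\check r_{12}$ directly.
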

\begin{proof} We start from the LHS of condition (2) of  Proposition \ref{prop2}
\begin{eqnarray}
 (\mbox{id} \times m)\ \check r_{12}\ \check r_{23} \ (x,y,w) = (\sigma^p_x(\sigma^p_y(w)), \ 
 \tau^p_{\sigma^p_y(w)}(x) \circ \tau^p_w(y)), \nonumber
\end{eqnarray}
which leads to 
\begin{equation}
\sigma^p_x(\sigma^p_y(w))  = f^p_{x\circ y}(w) = f^p_{\sigma^p_x(y)\circ \tau^p_y(x)}(w)  = 
\sigma^p_{\sigma^p_x(y)}(\sigma^p_{\tau^p_y(x)}(w)) \nonumber
\end{equation}
i.e.  the fundamental condition (\ref{C1}) is satisfied.  Moreover,  using condition (1) we show
\begin{eqnarray}
\tau^p_{\sigma^p_y(w)}(x) \circ \tau^p_w(y) = \sigma^p_x(\sigma^p_y(w))^{-1}\circ x\circ \sigma^p_y(w) \circ 
\sigma^p_y(w)^{-1}\circ y \circ w
=  f^p_{x\circ y}(w)^{-1}\circ x \circ y\circ w, \nonumber
\end{eqnarray}
as expected compatible with condition (2) of Proposition \ref{prop2}.

Similarly,  from the LHS of condition (3) 
\begin{eqnarray}
(m \times \mbox{id})\ \check r_{23}\ \check r_{12}\ (x,y,w)= (\sigma_x^p(y) \circ \sigma^p_{\tau^p_y(x)}(w),\  
\tau^p_w(\tau^p_y(x))). \nonumber
\end{eqnarray}
The latter expression leads to 
\begin{equation}
\sigma_x^p(y) \circ \sigma^p_{\tau^p_y(x)}(w) = g_x^p(y\circ w).
\end{equation}
Also, via condition (1) 
\begin{eqnarray}
\tau^p_w(\tau^p_y(x)) &=& \sigma^p_{\tau^p_y(x)}(w)^{-1} \circ \sigma_x^p(y)^{-1} \circ x \circ y \circ w 
= g^p_x(y\circ w)^{-1} \circ x \circ y\circ w \nonumber\\
 &=&\tau^p_{\tau^p_w(y)}(\tau^p_{\sigma^p_y(w)}(x))
\end{eqnarray}
compatible with condition (3) of Proposition \ref{prop2},  and this shows condition (\ref{C3}).

Having shown properties \ref{C1} and (\ref{C2}) and taking into account that $x\circ y = \sigma_x^p(y) \circ \tau_y^p(x)$ 
we also show condition (\ref{C3}),  i.e.  we conclude that $\check r,$ as defined in Proposition \ref{prop2},  
is a solution of the braid equation.
\end{proof}

\begin{lemma} Let  $B$ be a near brace, and consider the map $\check r: B \times B \to B\times B,$ 
$\check r(x,y) = ( \sigma^p_x(y), \ \tau^p_y(x))$ of Proposition \ref{prop:main}. Then $\check r$ is a $p$-braiding.
\end{lemma}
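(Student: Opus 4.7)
The plan is to verify directly the three conditions appearing in Definition \ref{defp} by invoking the three identities already proved in Proposition \ref{lem:long2}. All of the computational content is encapsulated in that proposition; this lemma is essentially a repackaging of its identities in the language of $p$-braidings, so I do not expect any deep obstacle.

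Condition (1), namely $x\circ y=\sigma^p_x(y)\circ\tau^p_y(x)$, is precisely part (1) of Proposition \ref{lem:long2}, built into the very definition of $\tau^p_y(x)=\sigma^p_x(y)^{-1}\circ x\circ y$. For condition (2), I compute
\[
(\mathrm{id}\times m)\,\check r_{12}\,\check r_{23}(x,y,w)=\bigl(\sigma^p_x(\sigma^p_y(w)),\ \tau^p_{\sigma^p_y(w)}(x)\circ\tau^p_w(y)\bigr).
\]
By part (2) of Proposition \ref{lem:long2}, the first coordinate equals $x\circ y\circ w\circ z_1\circ z_1-x\circ y\circ\xi\circ z_1+c_1+z_2$, which depends on $x,y$ only through the product $x\circ y$. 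This naturally suggests the definition
\[
f^p_a(w):=a\circ w\circ z_1\circ z_1-a\circ\xi\circ z_1+c_1+z_2.
\]
Using $\tau^p_b(c)=\sigma^p_c(b)^{-1}\circ c\circ b$ twice and telescoping, the second coordinate reduces to $\sigma^p_x(\sigma^p_y(w))^{-1}\circ x\circ y\circ w=f^p_{x\circ y}(w)^{-1}\circ x\circ y\circ w$, which matches the form required by Definition \ref{defp}(2).

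Condition (3) is treated symmetrically. A direct computation yields $(m\times\mathrm{id})\,\check r_{23}\,\check r_{12}(x,y,w)=(\sigma^p_x(y)\circ\sigma^p_{\tau^p_y(x)}(w),\ \tau^p_w(\tau^p_y(x)))$, and part (3) of Proposition \ref{lem:long2} rewrites the first coordinate as $x\circ(y\circ w)\circ z_1+c_2-x\circ\xi\circ z_2+z_2\circ z_2$, a function of $x$ and the product $y\circ w$. Setting
\[
g^p_x(v):=x\circ v\circ z_1+c_2-x\circ\xi\circ z_2+z_2\circ z_2,
\]
the same telescoping as before collapses the second coordinate to $g^p_x(y\circ w)^{-1}\circ x\circ y\circ w$, as required.

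The last point is the bijectivity of $f^p_a$ and $g^p_x$ in their free variable for every fixed $a,x\in B$. In each case the map factors as left multiplication by a fixed element of the group $(B,\circ)$, right multiplication by a fixed element of $(B,\circ)$, and a right translation in the group $(B,+)$; the first two are bijections because $(B,\circ)$ is a group, and the last is a bijection by Lemma \ref{lem:bij}. The main obstacle, such as it is, is merely notational bookkeeping to read off the correct closed forms for $f^p$ and $g^p$ from Proposition \ref{lem:long2}; no new algebraic identity is needed.
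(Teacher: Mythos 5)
Your proposal is correct and follows essentially the same route as the paper: the paper's proof likewise just invokes Proposition \ref{lem:long2} to verify the conditions of Definition \ref{defp} and records exactly the same closed forms $f^p_a(b)=a\circ b\circ z_1\circ z_1-a\circ\xi\circ z_1+c_1+z_2$ and $g^p_a(b)=a\circ b\circ z_1+c_2-a\circ\xi\circ z_2+z_2\circ z_2$. Your additional remark on the bijectivity of $f^p_a$ and $g^p_x$ (via composition of group translations in $(B,\circ)$ and $(B,+)$) is a detail the paper leaves implicit, and is a welcome addition.
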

\begin{proof}
The proof is straightfroward via Proposition \ref{lem:long2}. Indeed, all the conditions of the $p$-braiding Definition \ref{defp} 
are satisfied and:\\
 $f^p_a(b) = a\circ b \circ z_1\circ z_1 - a\circ\xi\circ z_1 + c_1 +z_2,$ $~ g^p_a(b) = a\circ b \circ z_1 +c_2-a\circ \xi\circ z_2+z_2\circ z_2.$
\end{proof}

With this we conclude our analysis on $p$-braidings and their connection to the YBE and the notion of the near  braces. One of the fundamental open problems in this frame and a natural next step is the solution of the set-theoretic reflection equation for this new class of solutions of the set-theoretic YBE.
We hope to address this problem and generalize the notion of the $p$-braiding to include the reflection equation, in the near future.  Another key question, which we hope to tackle soon,  is what the effect of non-associativity in $(X, +)$ on the construction of the algebraic structures emerging from solutions of the set-theoretic YBE would be. This is quite a challenging problem, the analysis of which will yield yet more generalized classes of solutions of the YBE.

\subsection*{Acknowledgments}
\noindent  Support from the EPSRC research grant EP/V008129/1 is acknowledged.

\end{document}